\numberwithin{equation}{section}
\theoremstyle{plain}
\newtheorem{thm}{Theorem}[section]
\newtheorem{lem}[thm]{Lemma}
\newtheorem{prop}[thm]{Proposition}
\newtheorem{cor}[thm]{Corollary}
\newtheorem*{thm*}{Theorem}
\newtheorem*{lem*}{Lemma}
\newtheorem*{prop*}{Proposition}
\newtheorem*{cor*}{Corollary}
\theoremstyle{definition}
\newtheorem{defn}[thm]{Definition}
\newtheorem*{defn*}{Definition}
\newtheorem{ex}[thm]{Example}
{}
\newtheorem{rem}[thm]{Remark}
\newtheorem*{rem*}{Remark}
\theoremstyle{remark}
{}
{}
{}
\def\cie{\subseteq}
\def\iso{\cong}
\def\to{\longrightarrow}
\def\int{\mathbb{Z}}
\def\e{\epsilon}
\def\D{\mathsf{D}}
\def\K{\mathsf{K}}
\def\sfT{\mathsf{T}}
\def\sfS{\mathsf{S}}
\def\sfP{\mathsf{P}}
\def\sfM{\mathsf{M}}
\def\sfL{\mathsf{L}}
\def\Ord{\mathsf{Ord}}
\def\GGamma{\mathit{\Gamma}}
\DeclareMathOperator{\Spec}{Spec}
\DeclareMathOperator{\Spc}{Spc}
\DeclareMathOperator{\supp}{supp}
\DeclareMathOperator{\Hom}{Hom}
\DeclareMathOperator{\hocolim}{hocolim}
\DeclareMathOperator{\kdim}{\mathrm{Kdim}}
\DeclareMathOperator{\cbrk}{\mathrm{CBrk}}
\DeclareMathOperator{\Loc}{\mathrm{loc}}
\DeclareMathOperator{\Thick}{\mathrm{thick}}
\title[The local-to-global principle via dimension functions]{The local-to-global principle for triangulated categories via dimension functions}
\author{Greg Stevenson}
\address{Greg Stevenson, Universit\"at Bielefeld, Fakult\"at f\"ur Mathematik, BIREP Gruppe, Postfach 10\,01\,31, 33501 Bielefeld, Germany.}
\email{gstevens@math.uni-bielefeld.de}
\thanks{This research was partially supported by a fellowship from the Alexander von Humboldt Foundation.}
\begin{document}

\subjclass[2000]{18E30, 55U35}

\keywords{Triangulated categories, local-to-global principle, tensor triangular geometry}

\begin{abstract}
\noindent 
We formulate a general abstract criterion for verifying the local-to-global principle for a rigidly-compactly generated tensor triangulated category. Our approach is based upon an inductive construction using dimension functions. Using our criterion we give a new proof of the theorem that the local-to-global principle holds for such categories when they have a model and the spectrum of the compacts is noetherian. As further applications we give a new set of conditions on the spectrum of the compacts that guarantee the local-to-global principle holds and use this to classify localising subcategories in the derived category of a semi-artinian absolutely flat ring.
%I love this sentence but it is too douchey
%In fact, our criterion gives slightly more: it obviates the need to close under tensoring for one of the contaiments in the local-to-global principle as well as the need to invoke a model in many cases of interest.
\end{abstract}

\maketitle

\tableofcontents

\section{Introduction}

The local-to-global principle is a powerful and striking property of a support theory on a triangulated category. Roughly speaking, it asserts that given some notion of support for objects of a triangulated category $\sfT$, taking values in a topological space $X$, one can reconstruct any object from the ``pieces of that object supported at points of $X$''.  It was originally christened and formulated in support theoretic language by Benson, Iyengar, and Krause in \cite{BIKStrat2}, although it is worth noting the idea already appears in Neeman's work on localising subcategories in the unbounded derived category of a commutative noetherian ring \cite{NeeChro} (in particular see his Lemma~2.10). The validity of the local-to-global principle has many deep consequences. For instance it implies that the notion of support in question is fine enough to detect whether or not an object is zero, reduces the computation of the lattice of (sufficiently nice) localising subcategories to understanding certain distinguished localising subcategories, and provides a good theory of functorial filtrations and Postnikov towers, with respect to a dimension function on the associated support variety, for the triangulated category. It is thus a fundamental question to determine in which contexts one has access to the local-to-global principle; this is precisely the question with which we concern ourselves in this article.

Let us now be a bit more concrete: let $\sfT$ be a rigidly-compactly generated tensor triangulated category and denote by $\Spc\sfT^c$ the spectrum, in the sense of Balmer \cite{BaSpec}, of its compact objects. If $\Spc \sfT^c$ satisfies certain ``reasonableness conditions'' then canonically associated to every point $x\in \Spc\sfT^c$ there is a tensor idempotent object $\mathit{\Gamma}_x\mathbf{1}$ such that tensoring an object with $\mathit{\Gamma}_x\mathbf{1}$ gives a sort of ``$x$-torsion and $x$-local'' approximation of that object. By definition the local-to-global principle holds if for every $A\in \sfT$ there is an equality of localising tensor ideals
\begin{displaymath}
\Loc^\otimes(A) = \Loc^\otimes(\mathit{\Gamma}_x\mathbf{1}\otimes A \; \vert \; x\in \Spc\sfT^c)
\end{displaymath}
i.e.\ the smallest localising tensor ideal containing $A$ coincides with the smallest localising tensor ideal containing the $\mathit{\Gamma}_x\mathbf{1}\otimes A$. 

In \cite{StevensonActions} Proposition~6.8 it was shown that the local-to-global principle holds provided $\sfT$ arises as the homotopy category of a monoidal model category and $\Spc \sfT^c$ is a noetherian topological space. On the other hand in \cite{StevensonAbsFlat} it was shown, by somewhat ad hoc methods, that there are examples where the spectrum of the compacts is \emph{not} noetherian but the local-to-global principle holds. The aim of this note is to rectify this situation by providing a general abstract criterion, namely Theorem~\ref{thm_abstract}, framed in the language of dimension functions on spectral spaces, to check the validity of the local-to-global principle. The proof is of a somewhat different flavour to the author's original proof and is based upon the inductive construction of objects from the filtrations provided by the dimension functions. It is closer in spirit to the proof of \cite{BIKStrat2}*{Theorem~3.4} and the constructions in \cite{StevensonFilt}. In fact it gives a slightly sharper statement, showing that one can build $A$ from the $\mathit{\Gamma}_x\mathbf{1}\otimes A$ without using tensor products.

Using this abstract criterion we give a new proof in Theorem~\ref{thm_noeth} of the local-to-global principle when $\Spc \sfT^c$ is noetherian and in Theorem~\ref{thm_absflat} we prove the local-to-global principle under a different set of assumptions on $\Spc \sfT^c$ generalising the examples of \cite{StevensonAbsFlat}. As an amusing aside our methods allow one to drop the assumption that $\sfT$ comes from a stable model category in many cases.

As an application of Theorem~\ref{thm_absflat} we complete the classification problem that was studied in \cite{StevensonAbsFlat}. In that article a classification of the localising subcategories of $\mathsf{D}(R)$ was given in the case $R$ is the absolutely flat approximation of a commutative ring with noetherian spectrum. Such rings are semi-artinian\textemdash{}every non-zero module has non-zero socle. On the other hand, it was shown that if $R$ is an absolutely flat ring which is not semi-artinian then the local-to-global principle fails. In Theorem~\ref{thm_app} we complete this to an honest dichotomy, proving that $\mathsf{D}(R)$ satisfies the local-to-global principle for any semi-artinian absolutely flat ring. Moreover, for such rings $R$ we use this to show that the localising subcategories of $\mathsf{D}(R)$ are in bijection with subsets of $\Spec R$.

\section{Preliminaries}
This section contains some brief reminders on various results, constructions, and topics that will be of use in the sequel together with some pertinent references. We begin with some fairly detailed recollections on the theory of spectral spaces. We then discuss, mostly to fix notation, some notions from tensor triangular geometry.

\subsection{Spectral spaces}\label{sec_spectralprelims}
Spectral spaces form the natural topological framework for support theory in tensor triangulated categories; by Stone duality they correspond to coherent frames and these are precisely the lattices which appear when studying tensor ideals in essentially small tensor triangulated categories. They are thus also the natural context for investigating the local-to-global principle. We give here the relevant definitions as well as fixing notation and recording some results we will need.

\begin{defn}\label{defn_spectral}
Let $X$ be a topological space. We say $X$ is a \emph{spectral space} if it is quasi-compact, $T_0$, has a basis of quasi-compact open subsets closed under finite intersections, and every non-empty irreducible closed subset has a (necessarily unique) generic point. A morphism $f\colon Y\to X$ of spectral spaces, namely a continuous map of topological spaces, is \emph{spectral} if it is quasi-compact, i.e.\ the preimage of every quasi-compact open subset of $X$ is quasi-compact in $Y$. We say that $Y$ is a spectral subspace of $X$ if $Y$ is a subspace of $X$ such that the inclusion is spectral.
\end{defn}

Given a spectral space $X$ and $x\in X$ we denote the closure of $x$ by $\mathcal{V}(x)$ and say $x$ \emph{specialises} to $y$ if $y\in \mathcal{V}(x)$. We also fix notation for the subset
\begin{displaymath}
\mathcal{Z}(x) = \{y\in X \; \vert \; x\notin \mathcal{V}(y)\},
\end{displaymath}
which is just the set of points not specialising to $x$.

\begin{defn}\label{defn_thomason}
A subset $\mathcal{V}\subseteq X$ is \emph{Thomason} if $\mathcal{V}$ is a (possibly infinite) union of closed subsets with quasi-compact open complements.
\end{defn}

\begin{ex}\label{ex_thomason}
Both $X$ and $\varnothing$ are always Thomason subsets. Further examples are given by the subsets $\mathcal{Z}(x)$ for $x\in X$\textemdash{}these are Thomason as the complement of $\mathcal{Z}(x)$ is just the intersection of the quasi-compact opens containing $x$. If $X$ is noetherian then the Thomason subsets are just the specialisation closed ones.
\end{ex}

We next discuss the constructible topology on a spectral space.

\begin{defn}\label{defn_cons}
The \emph{constructible topology} on $X$ is generated by the collection of quasi-compact open subsets of $X$ and their complements as a subbasis of open subsets. We denote $X$ together with this topology by $X^\mathrm{con}$. Given a subset $\mathcal{V}\subseteq X$ we say it is \emph{proconstructible} if it is closed in the constructible topology. We say that $X$ carries the constructible topology if $X = X^\mathrm{con}$.
\end{defn}

We now state some important facts concerning the constructible topology on $X$.

\begin{prop}\label{prop_cons}
Let $X$ be a spectral space. The space $X^\mathrm{con}$ is a quasi-compact Hausdorff spectral space such that
\begin{displaymath}
(X^\mathrm{con})^\mathrm{con} = X^\mathrm{con}.
\end{displaymath}
Furthermore, given a subset $Y$ of $X$ the following two conditions are equivalent:
\begin{itemize}
\item[$(1)$] $Y$ is proconstructible, i.e.\ closed in $X^\mathrm{con}$;
\item[$(2)$] $Y$ with the subspace topology is spectral and $Y\to X$ is a spectral map, i.e.\ $Y$ is a spectral subspace of $X$.
\end{itemize}
\end{prop}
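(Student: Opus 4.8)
The plan is to establish the structural claims about $X^{\mathrm{con}}$ first and then read off the equivalence $(1)\Leftrightarrow(2)$. That $X^{\mathrm{con}}$ is Hausdorff is immediate: for $x\neq y$ the $T_0$ axiom together with the existence of a basis of quasi-compact opens yields a quasi-compact open $U$ containing exactly one of $x,y$, and then $U$ and $X\setminus U$ are disjoint opens of $X^{\mathrm{con}}$ separating them; since $U$ and its complement are \emph{both} subbasic opens of $X^{\mathrm{con}}$, each such $U$ is in fact clopen, so $X^{\mathrm{con}}$ has a basis of clopen sets. The substantial point is that $X^{\mathrm{con}}$ is quasi-compact, and I would prove this via Alexander's subbasis lemma: it then suffices to extract a finite subcover from a cover of $X$ by sets of the form $U$ and $X\setminus V$ with $U,V$ quasi-compact open, and it is precisely here that the sobriety clause of Definition~\ref{defn_spectral} is essential — dropping it destroys the conclusion (the cofinite topology on an infinite set is $T_0$ with a quasi-compact open basis but has discrete, hence non-quasi-compact, constructible topology); one may alternatively realise $X^{\mathrm{con}}$ as a closed subspace of the Cantor cube $\prod_U\{0,1\}$ indexed by the quasi-compact opens $U$ and invoke Tychonoff. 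Once quasi-compactness is in hand, $X^{\mathrm{con}}$ is a quasi-compact Hausdorff space with a basis of clopens; being Hausdorff its irreducible closed subsets are points, so it satisfies all the axioms of Definition~\ref{defn_spectral} and is spectral. Finally, a quasi-compact open subset of a Hausdorff space is closed, so the quasi-compact opens of $X^{\mathrm{con}}$ are exactly its clopen subsets, which already form a basis; hence the constructible topology they generate is $X^{\mathrm{con}}$ itself, i.e.\ $(X^{\mathrm{con}})^{\mathrm{con}}=X^{\mathrm{con}}$.

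For $(2)\Rightarrow(1)$, suppose $Y$ is a spectral subspace of $X$. Since the inclusion $\iota\colon Y\hookrightarrow X$ is a spectral map, $\iota^{-1}(U)=U\cap Y$ is quasi-compact open in $Y$ for every quasi-compact open $U$ of $X$; checking on the defining subbasis (the subbasic open $U$ pulls back to the quasi-compact open $U\cap Y$, and $X\setminus U$ to its complement in $Y$) this shows $\iota$ induces a \emph{continuous} map $Y^{\mathrm{con}}\to X^{\mathrm{con}}$. By the previous paragraph $Y^{\mathrm{con}}$ is quasi-compact and $X^{\mathrm{con}}$ is Hausdorff, so the image of this map — on underlying sets simply the subset $Y$ — is quasi-compact, hence closed, in $X^{\mathrm{con}}$. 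Thus $Y$ is proconstructible.

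For $(1)\Rightarrow(2)$, let $Y$ be proconstructible and give it the subspace topology $\tau$ from $X$; write $\tau'$ for the finer subspace topology from $X^{\mathrm{con}}$, under which $Y$ is a closed subspace of $X^{\mathrm{con}}$ and hence a quasi-compact Hausdorff space with a basis of clopens. Three of the four requirements of Definition~\ref{defn_spectral} for $(Y,\tau)$ are cheap: it is $T_0$ (a subspace of $X$), it is quasi-compact (a coarsening of the quasi-compact $(Y,\tau')$), and the sets $U\cap Y$ with $U$ quasi-compact open in $X$ form a basis of $(Y,\tau)$ closed under finite intersection — each being clopen in $(Y,\tau')$, since both $U$ and $X\setminus U$ are open in $X^{\mathrm{con}}$, and therefore quasi-compact in $(Y,\tau')$ and a fortiori in the coarser $(Y,\tau)$; this same description shows $\iota\colon(Y,\tau)\hookrightarrow X$ is spectral. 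The only delicate axiom is sobriety. Given a non-empty irreducible closed $C$ in $(Y,\tau)$, note that $C$ is also $\tau'$-closed, hence closed in $X^{\mathrm{con}}$ and so compact in the $\tau'$-topology; the non-empty members of the family $\{C\cap U : U\text{ quasi-compact open in }X\}$ are clopen subsets of $C$ for $\tau'$ and, by irreducibility of $C$ in $\tau$, are closed under finite intersection, so compactness produces a point $\eta$ lying in every one of them. As these sets form a basis for $C$ in $\tau$, the point $\eta$ belongs to every non-empty $\tau$-open subset of $C$, so $\overline{\{\eta\}}=C$ in $(Y,\tau)$; thus $\eta$ is a generic point for $C$, and $(Y,\tau)$ is spectral, i.e.\ $Y$ is a spectral subspace of $X$.

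The main obstacle is the quasi-compactness of $X^{\mathrm{con}}$ in the first step: it is the one assertion for which sobriety is genuinely needed and on which the remaining arguments lean. The secondary difficulty is the construction of generic points in $(1)\Rightarrow(2)$; both are naturally handled by exploiting the compactness of the constructible topology rather than working in the original topology of $X$.
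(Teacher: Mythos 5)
Your argument is correct, and in two places it genuinely diverges from what the paper does. For the first assertion (that $X^\mathrm{con}$ is quasi-compact Hausdorff, spectral, and equal to its own constructible topology) the paper simply cites the Stacks project, whereas you argue it directly; your deductions of Hausdorffness, sobriety-via-Hausdorffness, and $(X^\mathrm{con})^\mathrm{con}=X^\mathrm{con}$ from the clopen basis are complete and correct, but note that the one substantial point, quasi-compactness, is only located rather than proved: you name Alexander's subbasis lemma (or the embedding into a Cantor cube) and correctly observe, with a valid cofinite-topology counterexample, that sobriety of $X$ is where the argument must bite, but the actual extraction of the finite subcover (the usual Zorn argument producing a minimal closed set whose generic point lies in all the relevant constructible sets) is not carried out; since the paper outsources exactly this to the literature, this is a sketch at the same level of detail as the paper rather than a gap in the equivalence itself. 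Your $(2)\Rightarrow(1)$ is the paper's argument, except that you supply the subbasis check of continuity of $Y^\mathrm{con}\to X^\mathrm{con}$ that the paper omits. The real difference is in the sobriety step of $(1)\Rightarrow(2)$: the paper passes to the closure $W$ of the irreducible closed set in $X$, shows $W$ is irreducible, invokes sobriety of $X$ to get its generic point, and then uses a finite-cover argument in $X^\mathrm{con}$ to force that point into $V$; you instead never leave $Y$, constructing the generic point of $C$ directly as a point in the intersection of the nonempty traces $C\cap U$, which are patch-closed subsets of the patch-compact $C$ with the finite intersection property thanks to irreducibility. Your route is a little slicker and more self-contained (sobriety of $X$ is used only once, inside the compactness of $X^\mathrm{con}$, rather than again at this step), at the cost of leaning harder on that unproved compactness; the paper's route makes the dependence on the ambient generic point explicit. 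You also streamline correctly by proving only the needed direction of the paper's preliminary observation that quasi-compact opens of $Y$ are the sets $U\cap Y$.
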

\begin{proof}
The proof of the first part of the proposition can be found in \cite{stacks-project}*{Tag 08YF}.

We give a (fairly detailed) sketch of the proof of the second part of the proposition, starting with (1) implies (2). Let $Y$ be a proconstructible subset of $X$ and equip it with the subspace topology. First observe that the quasi-compact open subsets of $Y$ are precisely the intersections of quasi-compact open subsets of $X$ with $Y$. Indeed, if $U$ is a quasi-compact open in $X$ then it is proconstructible, and so the open subset $U\cap Y$ of $Y$ is also proconstructible, as $Y$ is. Hence $U\cap Y$ is quasi-compact by virtue of being closed in a quasi-compact Hausdorff space whose topology is finer than the topology on $X$. On the other hand, if $W$ is a quasi-compact open in $Y$, then there exists an open $U$ in $X$ such that $W = U\cap Y$. Since $X$ is spectral we can write $U$ as a union $U_i$, over some index set $I$, of quasi-compact open subsets. As $W$ is quasi-compact with $W\subseteq \cup_i U_i$ there exists some finite set of indices $i_1,\ldots, i_n$ with $W \subseteq \cup_{j=1}^n U_{i_j} = U'$. It follows that $W = U'\cap Y$ with $U'$ quasi-compact as desired. 

We note this already shows that the inclusion $Y\to X$ is quasi-compact. Moreover, the only requirement for $Y$ to be spectral which does not follow immediately is that irreducible closed subsets have generic points. To this end, suppose $V\subseteq Y$ is an irreducible closed subset and denote by $W$ the closure of $V$ in $X$. As $V$ is closed in $Y$ we have $W\cap Y = V$.

Using this, one readily checks that $V$ is compact in $X^\mathrm{con}$ and that $W$ is irreducible in $X$. In particular, $W$ has a unique generic point $x$ and we proceed by showing $x\in V$. If $x\notin V$ then for each $v\in V$ we can find a quasi-compact open subset $U_v$ of $X$ such that $v\notin U_v$ and $x\in U_v$; this is a consequence of $X$ being $T_0$ and having a basis of quasi-compact open subsets, the other possibility that $x\notin U_v$ and $v\in U_v$ being excluded by the fact that this would imply $v\notin \mathcal{V}(x) = W$. Thus
\begin{displaymath}
V \subseteq \bigcup_{v\in V} X\setminus U_v
\end{displaymath}
and it follows from compactness of $V$ in $X^\mathrm{con}$ that there are a finite set $v_1,\ldots, v_n$ of points in $V$ with
\begin{displaymath}
V \subseteq \bigcup_{i=1}^n X\setminus U_{v_i}.
\end{displaymath}
However, irreducibility of $W$ together with the fact that the right hand side of the above expression is closed implies $W$ would then be contained in some $X\setminus U_{v_i}$ which is absurd, as none of these closed subsets contains $x$. Thus $x\in V$ and we see that $V$ is the closure of $\{x\}$ in $Y$ as desired.

In order to show (2) implies (1) one checks that $Y^\mathrm{con} \stackrel{i}{\to} X^\mathrm{con}$ is again continuous (we omit the details, but they can be found in \cite{stacks-project}*{Tag 08YF}). Given this one just observes that $Y^\mathrm{con}$ is compact, hence its image under $i$ is compact and thus closed in $X^\mathrm{con}$, i.e.\ proconstructible, since the latter space is Hausdorff.
\end{proof}

Lastly we introduce visible points, the relevance of which will become clear when we discuss supports below.

\begin{defn}
We say a point $x\in X$ is \emph{visible} if there exist Thomason subsets $\mathcal{V}$ and $\mathcal{W}$ such that
\begin{displaymath}
\{x\} = \mathcal{V} \setminus (\mathcal{V} \cap \mathcal{W}).
\end{displaymath}
\end{defn}

\begin{lem}\label{lem_consvis}
Let $X$ be a spectral space with $X = X^\mathrm{con}$. Then every point of $X$ is visible.
\end{lem}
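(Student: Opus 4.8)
The plan is to reduce the statement to two things already recorded in the excerpt: the fact (Proposition~\ref{prop_cons}) that $X^{\mathrm{con}}$ is a Hausdorff space, and the observation (Example~\ref{ex_thomason}) that both $X$ itself and each subset $\mathcal{Z}(x)$ are Thomason. The point is that the hypothesis $X = X^{\mathrm{con}}$ is exactly what is needed to upgrade the $T_0$ separation of a general spectral space to Hausdorff separation, and once one has that, the set $\mathcal{Z}(x)$ becomes the complement of a single point.

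First I would unwind the hypothesis. Since $X = X^{\mathrm{con}}$, Proposition~\ref{prop_cons} tells us that $X$ is quasi-compact and Hausdorff. In a Hausdorff space every singleton is closed, so $\mathcal{V}(y) = \{y\}$ for every $y \in X$. Feeding this into the definition of $\mathcal{Z}(x)$ gives
\[
\mathcal{Z}(x) = \{y\in X \; \vert \; x\notin \mathcal{V}(y)\} = \{y \in X \; \vert \; y \neq x\} = X \setminus \{x\}.
\]
Now I would simply assemble the two Thomason subsets witnessing visibility: by Example~\ref{ex_thomason} both $\mathcal{V} := X$ and $\mathcal{W} := \mathcal{Z}(x)$ are Thomason, and
\[
\mathcal{V} \setminus (\mathcal{V}\cap \mathcal{W}) = X \setminus \mathcal{Z}(x) = X \setminus (X\setminus\{x\}) = \{x\}.
\]
Hence $x$ is visible, and since $x$ was arbitrary the lemma follows.

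There is really no serious obstacle here; the only point requiring care is citing the correct ingredients and noting precisely where the standing hypothesis $X = X^{\mathrm{con}}$ enters, namely in promoting $T_0$ to Hausdorff so that $\mathcal{Z}(x)$ collapses to $X\setminus\{x\}$. As an alternative to invoking Example~\ref{ex_thomason} for $\mathcal{Z}(x)$, one could argue directly that $X\setminus\{x\}$ is Thomason: in this setting the quasi-compact opens coincide with the clopen subsets, and for each $y\neq x$ one may choose a clopen set $C_y$ with $y\in C_y$ and $x\notin C_y$, so that $X\setminus\{x\} = \bigcup_{y\neq x} C_y$ exhibits it as a union of closed sets with quasi-compact open complements. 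Routing the argument through $\mathcal{Z}(x)$ is cleaner, so that is the route I would take.
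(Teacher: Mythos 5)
Your argument is correct and is essentially the paper's own proof: both take $\mathcal{V}=X$ and $\mathcal{W}=\mathcal{Z}(x)$ as the Thomason witnesses, using that Hausdorffness of $X=X^{\mathrm{con}}$ forces $\mathcal{Z}(x)=X\setminus\{x\}$. No gaps; the extra remark about clopen sets is just an optional unpacking of why $\mathcal{Z}(x)$ is Thomason.
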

\begin{proof}
As noted in Example~\ref{ex_thomason} for every $x\in X$ the subset $\mathcal{Z}(x)$ is Thomason. As $X$ is Hausdorff every point is closed, i.e.\ there are no non-trivial specialisation relations, so $\mathcal{Z}(x) = X\setminus \{x\}$. We can thus take $X$ itself and $\mathcal{Z}(x)$ as a pair of Thomason subsets witnessing the visibility of $x$. 
\end{proof}

The main abstract fact we will need concerning visible points is that visibility passes to subspaces.

\begin{lem}\label{lem_vis}
Let $X$ be a spectral space and let $i\colon Y\to X$ be a spectral subspace. If $y\in Y$ is visible as a point of $X$, i.e.\ $i(y)$ is visible, then $y$ is visible in $Y$. In particular, if every point of $X$ is visible then every point of $Y$ is visible.
\end{lem}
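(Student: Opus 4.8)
The plan is to pull back a witnessing pair of Thomason subsets along the inclusion $i$ and then check the two things one needs: that the pullbacks are again Thomason subsets of $Y$, and that they still cut out the singleton $\{y\}$.

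First I would fix Thomason subsets $\mathcal{V},\mathcal{W}\subseteq X$ with $\{i(y)\}=\mathcal{V}\setminus(\mathcal{V}\cap\mathcal{W})$, which exist by hypothesis, and set $\mathcal{V}'=\mathcal{V}\cap Y$ and $\mathcal{W}'=\mathcal{W}\cap Y$ (regarding $Y$ as a subset of $X$ via $i$). The set-theoretic part is then immediate: since $y\in Y$,
\[
\mathcal{V}'\setminus(\mathcal{V}'\cap\mathcal{W}') \;=\; \bigl(\mathcal{V}\setminus(\mathcal{V}\cap\mathcal{W})\bigr)\cap Y \;=\; \{i(y)\}\cap Y \;=\; \{y\}.
\]
So the content is entirely in showing that $\mathcal{V}'$ and $\mathcal{W}'$ are Thomason subsets of $Y$.

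For that I would write $\mathcal{V}=\bigcup_\alpha Z_\alpha$ with each $Z_\alpha$ closed in $X$ and $X\setminus Z_\alpha$ quasi-compact open, so that $\mathcal{V}'=\bigcup_\alpha(Z_\alpha\cap Y)$ with each $Z_\alpha\cap Y$ closed in $Y$ and with complement $Y\setminus(Z_\alpha\cap Y)=(X\setminus Z_\alpha)\cap Y$, which is certainly open in the subspace topology. The one substantive point—and the only place the hypotheses enter—is that this complement is moreover quasi-compact in $Y$: this is exactly the statement that the preimage under the spectral map $i$ of the quasi-compact open $X\setminus Z_\alpha$ is quasi-compact. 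Thus $\mathcal{V}'$ exhibits itself as a union of closed subsets of $Y$ with quasi-compact open complements, hence is Thomason; the identical argument handles $\mathcal{W}'$. Combining this with the displayed identity shows that $y$ is visible in $Y$, and the final sentence of the statement then follows by applying the result pointwise.

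I do not anticipate a genuine obstacle here: the argument is essentially bookkeeping around the definitions of Thomason subset and visibility, with the single real input being that quasi-compactness of open subsets is preserved under pullback along a spectral map, which is built into the definition of a spectral subspace.
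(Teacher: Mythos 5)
Your proposal is correct and follows essentially the same route as the paper: pull back the witnessing Thomason subsets $\mathcal{V},\mathcal{W}$ along the spectral inclusion $i$ and use the fact that preimages of quasi-compact opens are quasi-compact to see that $i^{-1}\mathcal{V}$ and $i^{-1}\mathcal{W}$ are again Thomason and still cut out $\{y\}$. The only difference is that you spell out the union decomposition and the set-theoretic identity, which the paper leaves implicit.
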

\begin{proof}
Let $X$, $Y$, and $y$ be as in the statement, i.e.\ $y$ is a visible point of $X$ lying in the proconstructible subset $Y$, and choose Thomason subsets $\mathcal{V}$ and $\mathcal{W}$ of $X$ such that $\{x\} = \mathcal{V} \setminus (\mathcal{V}\cap \mathcal{W})$. Since $i\colon Y\to X$ is spectral the preimage of a closed subset of $X$ with quasi-compact complement is such a subset of $Y$. We immediately deduce from this observation that $i^{-1}\mathcal{V}$ and $i^{-1}\mathcal{W}$ are Thomason subsets of $Y$ which then witness the visibility of $y$ as a point of $Y$. 
\end{proof}

\subsection{Tensor triangular geometry}
Let us now give a whirlwind tour of some aspects of support theory for rigidly-compactly generated tensor triangulated categories. We define the main players and the functors which we will use throughout but are sparing with the motivation and details. A thorough treatment of these ideas can be found in \cite{BaRickard} and \cite{StevensonActions}.

\begin{defn}
A \emph{tensor triangulated category} is a triple $(\K, \otimes, \mathbf{1})$, where $\K$ is a triangulated category and
\begin{displaymath}
-\otimes-\colon \K\times \K \to \K
\end{displaymath}
is a symmetric monoidal structure on $\K$ with unit $\mathbf{1}$ and with the property that, for all $k\in \K$, the endofunctors $k\otimes -$ and $-\otimes k$ are exact.
\end{defn}

\begin{rem}
Throughout we shall not make explicit the associativity, symmetry, and unit constraints for the symmetric monoidal structure on a tensor triangulated category. By standard coherence results for monoidal structures this will not get us into any trouble.
\end{rem}

\begin{defn}\label{defn_rigid}
Let $\K$ be an essentially small tensor triangulated category. Assume that $\K$ is closed symmetric monoidal, i.e.\ for each $k\in \K$ the functor $k\otimes-$ has a right adjoint which we denote $\hom(k,-)$. These functors can be assembled into a bifunctor $\hom(-,-)$ which we call the \emph{internal hom} of $\K$. By definition one has, for all $k,l,m \in \K$, the tensor-hom adjunction
\begin{displaymath}
\K(k\otimes l, m) \cong \K(l, \hom(k,m)),
\end{displaymath}
with corresponding units and counits
\begin{displaymath}
\eta_{k,l} \colon l \to \hom(k,k\otimes l) \quad \text{and} \quad \epsilon_{k,l} \colon \hom(k,l)\otimes k \to l.
\end{displaymath}
The \emph{dual} of $k\in \K$ is the object
\begin{displaymath}
k^\vee = \hom(k,\mathbf{1}).
\end{displaymath}
Given $k,l \in \K$ there is a natural \emph{evaluation map}
\begin{displaymath}
k^\vee \otimes l \to \hom(k,l),
\end{displaymath}
which is defined by following the identity map on $l$ through the composite
\begin{displaymath}
\xymatrix{
\K(l,l) \ar[r]^-\sim & \K(l\otimes \mathbf{1}, l) \ar[rr]^-{\K(l\otimes \e_{k,\mathbf{1}}, l)} && \K(k\otimes k^\vee \otimes l, l) \ar[r]^-\sim & \K(k^\vee \otimes l, \hom(k,l)).
}
\end{displaymath}
We say that $\K$ is \emph{rigid} if for all $k,l \in \K$ this natural evaluation map is an isomorphism
\begin{displaymath}
k^\vee \otimes l \stackrel{\sim}{\to} \hom(k,l).
\end{displaymath}
\end{defn}

Our main interest will be in certain compactly generated tensor triangulated categories, namely the rigidly-compactly generated ones.

\begin{defn}
A \emph{rigidly-compactly generated tensor triangulated category} $\sfT$ is a compactly generated tensor triangulated category such that the full subcategory $\sfT^c$ of compact objects is rigid. Explicitly, not only is $\sfT^c$ a tensor subcategory of $\sfT$, but it is closed under the internal hom (which exists by Brown representability) and is rigid in the sense of Definition~\ref{defn_rigid}.
\end{defn}

Fix a rigidly-compactly generated tensor triangulated category $\sfT$. We say that a full replete subcategory $\sfL$ of $\sfT$ is \emph{localising} if it is closed under suspensions, cones, and arbitrary coproducts and it is a \emph{localising tensor ideal} if it is in addition closed under tensoring with arbitrary objects of $\sfT$. A full replete subcategory $\sfM$ of $\sfT^c$ is \emph{thick} if it is closed under suspensions, cones, and direct summands and one defines a thick tensor ideal in the obvious way. Given classes of objects $S\subseteq \sfT$ and $S'\subseteq \sfT^c$ we denote by
\begin{displaymath}
\Loc(S), \; \Loc^\otimes(S), \; \Thick(S'), \; \text{and}\; \Thick^\otimes(S')
\end{displaymath}
the smallest localising subcategory, localising tensor ideal, thick subcategory, and thick tensor ideal containing $S$ or $S'$ respectively.

As in \cite{BaSpec} we can associate to $\sfT^c$ its spectrum $\Spc \sfT^c$ which is the set of \emph{prime tensor ideals} in $\sfT^c$ i.e.\ those proper thick tensor ideals $\sfP$ such that if $t\otimes t'\in \sfP$ then one of $t$ or $t'$ lies in $\sfP$. This is endowed with a topology by declaring a basis of closed subsets to be given by the \emph{supports} of objects $t\in \sfT^c$:
\begin{displaymath}
\supp t = \{\sfP\in \Spc\sfT^c \; \vert \; t\notin \sfP\}.
\end{displaymath}

A key fact, which we exploit throughout the whole paper, is the following theorem. Although it is not stated explicitly in Balmer's paper \cite{BaSpec} all of the ingredients are there. It was later generalised in work of Buan, Krause, and Solberg \cite{BKS}.

\begin{thm}
The space $\Spc \sfT^c$ is spectral in the sense of Definition~\ref{defn_spectral}.
\end{thm}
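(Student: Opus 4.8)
The plan is to verify, directly from the construction of $\Spc\sfT^c$, the four defining properties in Definition~\ref{defn_spectral}. The only inputs needed are the elementary properties of the support on $\sfT^c$: one has $\supp 0 = \varnothing$, $\supp\mathbf 1 = \Spc\sfT^c$, $\supp\Sigma t = \supp t$, $\supp(t\oplus s) = \supp t\cup\supp s$, $\supp t\subseteq\supp t'\cup\supp t''$ for a triangle $t'\to t\to t''\to\Sigma t'$ and its rotations, and, since the points of $\Spc\sfT^c$ are \emph{prime} tensor ideals, $\supp(t\otimes s) = \supp t\cap\supp s$. Writing $U(t) = \Spc\sfT^c\setminus\supp t$, the $U(t)$ form a basis for the topology by definition, and $U(t)\cap U(s) = U(t\oplus s)$ together with $U(0) = \Spc\sfT^c$ exhibits this basis as closed under finite intersections; what remains for that axiom is that each $U(t)$ be quasi-compact. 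The $T_0$ axiom is immediate, as distinct primes $\sfP\neq\sfQ$ are separated by some $U(t)$ with $t$ lying in exactly one of them.

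I would establish quasi-compactness in the sharper form that every $U(t)$ — in particular $\Spc\sfT^c = U(0)$ — is quasi-compact, by a Zorn's lemma argument. Suppose $U(t)\subseteq\bigcup_{i\in I}U(s_i)$ admits no finite subcover. Unwinding this, for each finite $F\subseteq I$ there is a prime containing $t$ but no $s_i$ with $i\in F$; as $\supp\bigl(\bigotimes_{i\in F}s_i\bigr) = \bigcap_{i\in F}\supp s_i$ and $\Thick^\otimes(t)\subseteq\{x : \supp x\subseteq\supp t\}$, this forces $\bigotimes_{i\in F}s_i\notin\Thick^\otimes(t)$. Let $S$ be the set of all finite tensor products $s_{i_1}\otimes\cdots\otimes s_{i_k}$; it is closed under $\otimes$ and disjoint from $\Thick^\otimes(t)$, so by Zorn there is a thick tensor ideal $\mathsf J\supseteq\Thick^\otimes(t)$ maximal subject to $\mathsf J\cap S = \varnothing$. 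Then $\mathsf J$ is prime: it is proper because $\mathbf 1\in\mathsf J$ would give $\mathsf J = \sfT^c\supseteq S\neq\varnothing$ (if $I=\varnothing$ there is nothing to prove), and if $a\otimes b\in\mathsf J$ with $a,b\notin\mathsf J$ then maximality forces $\Thick^\otimes(\mathsf J,a)$ and $\Thick^\otimes(\mathsf J,b)$ to meet $S$, say in $u$ and $v$, and then $u\otimes v\in\Thick^\otimes(\mathsf J,a)\otimes\Thick^\otimes(\mathsf J,b)\subseteq\Thick^\otimes(\mathsf J,a\otimes b) = \mathsf J$ contradicts $u\otimes v\in S$. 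But $\mathsf J\in U(t)$ while $\mathsf J\notin U(s_i)$ for every $i$, contradicting the assumed cover; hence $U(t)$ is quasi-compact.

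For generic points, let $Z\subseteq\Spc\sfT^c$ be non-empty irreducible and closed, and put $\sfP = \{t\in\sfT^c : Z\notcie\supp t\}$. Using the calculus of supports above, together with irreducibility of $Z$ — which makes $Z\notcie\supp a$ and $Z\notcie\supp b$ imply $Z\notcie\supp a\cup\supp b = \supp(a\oplus b)$, hence $Z\notcie\supp c$ whenever $\supp c\subseteq\supp a\cup\supp b$ — one checks that $\sfP$ is a thick tensor ideal. It is proper since $Z\subseteq\supp\mathbf 1$ forces $\mathbf 1\notin\sfP$, and prime since $t\otimes s\in\sfP$ means $Z\notcie\supp t\cap\supp s$, so some point of $Z$ lies outside $\supp t$ or outside $\supp s$. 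As every closed set is an intersection of basic closed sets $\supp e$, and $Z\subseteq\supp e$ holds precisely when $e\notin\sfP$, we obtain $Z = \bigcap_{e\notin\sfP}\supp e = \{\sfQ\in\Spc\sfT^c : \sfQ\subseteq\sfP\} = \overline{\{\sfP\}}$; thus $\sfP\in Z$ is a generic point, unique by the $T_0$ axiom.

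The step I expect to require the most care is the quasi-compactness argument, specifically verifying that a maximal thick tensor ideal disjoint from the multiplicatively closed set $S$ is prime; the remaining verifications are routine manipulations with supports.
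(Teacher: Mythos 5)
Your proof is correct. Note, though, that the paper itself offers no argument for this theorem: it is stated as a known fact, with the remark that all the ingredients are in Balmer's paper \cite{BaSpec} (and that the statement was later generalised by Buan--Krause--Solberg \cite{BKS}). What you have written is essentially a self-contained reconstruction of Balmer's original ingredients: the $T_0$ property and the identity $U(t)\cap U(s)=U(t\oplus s)$ are immediate from the definitions; quasi-compactness of every basic open $U(t)$ (hence of $\Spc\sfT^c=U(0)$) is obtained, exactly as in Balmer's Lemma~2.2 and its applications, by producing via Zorn's lemma a prime that is maximal among thick tensor ideals containing $\Thick^\otimes(t)$ and disjoint from the multiplicative set of finite tensor products of the $s_i$; and the existence of generic points is your assignment $Z\mapsto\sfP=\{t:Z\notcie\supp t\}$, which recovers Balmer's bijection between points and irreducible closed subsets together with the description $\overline{\{\sfP\}}=\{\sfQ:\sfQ\cie\sfP\}$. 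The delicate points are all handled properly: irreducibility is invoked precisely where it is needed (closure of $\sfP$ under cones), properness of the maximal ideal is checked (with the degenerate case $I=\varnothing$ noted), and primality of the maximal ideal is argued via $\Thick^\otimes(\mathsf J,a)\otimes\Thick^\otimes(\mathsf J,b)\cie\Thick^\otimes(\mathsf J,a\otimes b)$, which is the standard and correct step. So your proposal fills in, correctly and along the same lines as the cited source, a proof the paper delegates to the literature.
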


Next let us recall the generalised Rickard idempotents which allow us to extend the support to arbitrary objects of $\sfT$ (in favourable situations) and introduce the local-to-global principle.

Let $\mathcal{V}$ be a Thomason subset of $\Spc\sfT^c$. There is, by \cite{BaSpec}*{Theorem~4.10}, a unique thick tensor ideal in $\sfT^c$ associated to $\mathcal{V}$, namely
\begin{displaymath}
\sfT^c_\mathcal{V} = \{t\in \sfT^c \; \vert \; \supp t \cie \mathcal{V}\}.
\end{displaymath}
We set
\begin{displaymath}
\mathit{\Gamma}_\mathcal{V}\sfT = \Loc(\sfT^c_\mathcal{V}).
\end{displaymath}
There is an associated smashing localisation sequence
\begin{displaymath}
\xymatrix{
\mathit{\Gamma}_\mathcal{V}\sfT \ar[r]<0.5ex> \ar@{<-}[r]<-0.5ex> & \sfT \ar[r]<0.5ex> \ar@{<-}[r]<-0.5ex> & L_\mathcal{V}\sfT 
}
\end{displaymath}
and we denote the corresponding acyclisation and localisation functors by $\mathit{\Gamma_\mathcal{V}}$ and $L_\mathcal{V}$ respectively. By \cite{BaRickard}*{Theorem~4.1} $\mathit{\Gamma}_\mathcal{V}\sfT$ is not only a smashing subcategory but a smashing tensor ideal. Thus there are associated tensor idempotents $\mathit{\Gamma}_\mathcal{V}\mathbf{1}$ and $L_\mathcal{V}\mathbf{1}$ which give rise to the acyclisation and localisation functors by tensoring. It is these idempotents that are used to define the support; the intuition is that, for $X\in \sfT$, the object $\mathit{\Gamma}_\mathcal{V}\mathbf{1} \otimes X$ is the ``piece of $X$ supported on $\mathcal{V}$'' and $L_\mathcal{V}\mathbf{1}\otimes X$ is the ``piece of $X$ supported on the complement of $\mathcal{V}$''.

\begin{defn}\label{defn_gen_ptfunctors2}
Let $x$ be a visible point of $\Spc \sfT^c$ and choose Thomason subsets $\mathcal{V}$ and $\mathcal{W}$ such that
\begin{displaymath}
\{x\} = \mathcal{V}\setminus (\mathcal{V}\cap \mathcal{W}).
\end{displaymath}
We define a tensor-idempotent
\begin{displaymath}
\mathit{\Gamma}_x\mathbf{1}= (\mathit{\Gamma}_{\mathcal{V}}\mathbf{1} \otimes L_{\mathcal{W}}\mathbf{1}).
\end{displaymath}
Following the intuition above, for an object $X\in \sfT$, the object $\mathit{\Gamma}_x\mathbf{1}\otimes X$ is supposed to be the ``piece of $X$ which lives only over the point $x\in \Spc\sfT^c$''.
\end{defn}

\begin{rem}\label{rem_bik6.2_uber}
Given any other Thomason subsets $\mathcal{V}'$ and $\mathcal{W}'$ of $\Spc \sfT^c$ with the property that $\mathcal{V}' \setminus (\mathcal{V}'\cap \mathcal{W}') = \{x\}$ we can define a similar object by forming the tensor product $\mathit{\Gamma}_{\mathcal{V}'}\mathbf{1} \otimes L_{\mathcal{W}'}\mathbf{1}$. By \cite{BaRickard}*{Corollary~7.5} any such object is uniquely isomorphic to $\mathit{\Gamma}_x\mathbf{1}$.
\end{rem}

The objects $\mathit{\Gamma}_x\mathbf{1}$ are used to define a notion of support on all of $\sfT$. We will assume from this point onward that every point of $\Spc\sfT^c$ is visible\textemdash{}this guarantees that there are enough of the objects $\mathit{\Gamma}_x\mathbf{1}$. We will, given $A\in \sfT$, often write $\mathit{\Gamma}_xA$ as shorthand for $\mathit{\Gamma}_x\mathbf{1}\otimes A$. 

\begin{defn}
For $A\in \sfT$ we define the \emph{support} of $A$ to be
\begin{displaymath}
\supp A = \{x\in \Spc\sfT^c \; \vert \; \mathit{\Gamma}_xA \neq 0\}.
\end{displaymath}
\end{defn}

Our main interest in this article is the following definition.

\begin{defn}\label{defn_ltg}
We say $\sfT$ satisfies the \emph{local-to-global principle} if for each $A$ in $\sfT$
\begin{displaymath}
\Loc^\otimes(A) = \Loc^\otimes(\mathit{\Gamma}_x A \; \vert \; x\in \Spc \sfT^c).
\end{displaymath}
\end{defn}

The local-to-global principle was originally introduced by Benson, Iyengar, and Krause in \cite{BIKStrat2} and considered in the general form stated above in \cite{StevensonActions}. In the latter paper it is shown that it holds in significant generality provided $\Spc \sfT^c$ is noetherian. The aim of this paper is to weaken this hypothesis by giving a different argument, which actually proves a slightly stronger statement. What the arguments have in common is the need for a reasonable theory of homotopy colimits. We close the preliminaries with a quick discussion of what is necessary to ensure we have access to such homotopy colimits.

\begin{defn}\label{defn_hocolim}
We will say the tensor triangulated category $\sfT$ \emph{has a (monoidal) model} if it occurs as the homotopy category of a stable monoidal model category.
\end{defn}

If $\sfT$ has a model in the above sense then we can use the rich theory of homotopy colimits that have been developed in the framework of stable model categories.

\begin{rem}
Of course instead of requiring that $\sfT$ arose from a stable monoidal model category we could, for instance, ask that $\sfT$ was the underlying category of a stable monoidal derivator. In fact we will only use homotopy colimits of shapes given by ordinals so one could use a weaker notion of a stable monoidal ``derivator'' only having homotopy left and right Kan extensions for certain diagrams; to be slightly more precise one could just ask for homotopy left and right Kan extensions along the smallest full $2$-subcategory of the category of small categories satisfying certain natural closure conditions and containing the ordinals (one can see the discussion before \cite{GrothPSD} Definition 4.21 for further details).
\end{rem}

\section{Dimension functions and the general criterion}
In this section we introduce spectral dimension functions and use them to formulate a general inductive approach to proving the local-to-global principle. We begin by introducing the dimension functions we will consider. Let us fix some spectral space $X$ and denote by $\Ord$ the class of ordinals.

\begin{defn}
We call a function $\dim \colon X \to \Ord$ a \emph{dimension function} if
\begin{itemize}
\item[(i)] $\dim$ does not take limit ordinal values i.e.\ $\dim x$ is not a limit ordinal for any $x\in X$;
\item[(ii)] for $x\in X$ and $y\in \mathcal{V}(x)$ we have $\dim y \leq \dim x$;
\item[(iii)] for $x\in X$ and $y\in \mathcal{V}(x)$ such that $\dim y = \dim x$ we have $x=y$.
\end{itemize}

Given a dimension function we set, for $\alpha \in \Ord$,
\begin{displaymath}
X_{\leq \alpha} = \{x\in X\; \vert\; \dim x \leq \alpha\} \quad \text{and} \quad X_{>\alpha} = X\setminus X_{\leq \alpha} = \{x\in X\; \vert \; \dim x > \alpha\}.
\end{displaymath}
We define the \emph{dimension} of $X$ relative to the dimension function to be the least ordinal $\alpha$ such that $X = X_{\leq \alpha}$ and denote it by $\dim X$. Such an ordinal is guaranteed to exist since $X$ has a set of points and the class of ordinals is well-ordered.

Finally we say $\dim \colon X\to \Ord$ is \emph{spectral} if $X_{\leq \alpha}$ is a Thomason subset of $X$ for all $\alpha \in \Ord$.
\end{defn}

Some of the reasoning behind the use of the adjective spectral is demonstrated by the following lemma.

\begin{lem}
Let $X$ be a spectral space and $\dim$ a spectral dimension function on $X$. Then for each ordinal $\alpha$ the subset $X_{>\alpha}$ equipped with the subspace topology is a spectral subspace of $X$.
\end{lem}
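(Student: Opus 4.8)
The plan is to reduce everything to Proposition~\ref{prop_cons}, which tells us that a subset of $X$ is a spectral subspace precisely when it is proconstructible, i.e.\ closed in $X^{\mathrm{con}}$. So rather than verifying the four axioms of Definition~\ref{defn_spectral} for $X_{>\alpha}$ by hand, I would instead show directly that $X_{>\alpha}$ is closed in the constructible topology.

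First I would unwind the definition of a spectral dimension function: since $\dim$ is spectral, $X_{\leq\alpha}$ is a Thomason subset, so by Definition~\ref{defn_thomason} we may write $X_{\leq\alpha} = \bigcup_{i\in I} Z_i$ where each $Z_i$ is closed with quasi-compact open complement $U_i := X\setminus Z_i$. Taking complements gives
\begin{displaymath}
X_{>\alpha} = X\setminus X_{\leq\alpha} = \bigcap_{i\in I} U_i.
\end{displaymath}
Next I would observe that each $U_i$ is clopen in $X^{\mathrm{con}}$: it is open there because the quasi-compact opens of $X$ form part of the subbasis generating the constructible topology, and it is closed there because its complement $Z_i$ is a complement of a quasi-compact open, which is also in that subbasis. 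Hence each $U_i$ is closed in $X^{\mathrm{con}}$, and an arbitrary intersection of closed sets is closed, so $X_{>\alpha} = \bigcap_i U_i$ is closed in $X^{\mathrm{con}}$, i.e.\ proconstructible. Applying the equivalence (1)$\Leftrightarrow$(2) of Proposition~\ref{prop_cons} then yields that $X_{>\alpha}$, with the subspace topology, is a spectral subspace of $X$.

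There is no real obstacle here; the only point requiring a moment's thought is the remark that quasi-compact opens are clopen in the constructible topology, which is immediate from Definition~\ref{defn_cons}. One could alternatively check the spectral space axioms for $X_{>\alpha}$ directly — using that quasi-compact opens of a proconstructible subspace are traces of quasi-compact opens of $X$, as in the proof of Proposition~\ref{prop_cons} — but routing through proconstructibility is cleaner and avoids repeating that argument.
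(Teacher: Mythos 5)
Your argument is correct and is essentially the paper's own proof: both pass from the Thomason property of $X_{\leq\alpha}$ to writing $X_{>\alpha}$ as an intersection of quasi-compact opens, conclude it is proconstructible, and invoke Proposition~\ref{prop_cons}. You merely spell out in more detail why quasi-compact opens are closed in $X^{\mathrm{con}}$, which the paper leaves implicit.
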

\begin{proof}
Let $\alpha$ be an ordinal. By definition $X_{\leq \alpha}$ is Thomason i.e.\ it can be written as a union of closed subsets with quasi-compact open complements. Thus $X_{>\alpha}$ is an intersection of quasi-compact opens. In particular, $X_{>\alpha}$ is proconstructible and hence a spectral subspace of $X$ by Proposition~\ref{prop_cons}.
\end{proof}

In order to give our criterion for the local-to-global principle we will have to consider families of spectral dimension functions on classes of spectral spaces. We next introduce the relevant definitions, starting with the families of spectral spaces we require.

\begin{defn}
Let $\mathcal{X}$ be a class of spectral spaces. We say $\mathcal{X}$ is \emph{subspace closed} if it contains all spectral subspaces of its members. More explicitly, we require that if $X\in \mathcal{X}$ and $Y\to X$ is the inclusion of a spectral subspace of $X$ then $Y\in \mathcal{X}$.
\end{defn}

\begin{rem}
Many interesting properties of spectral spaces give rise to subspace closed classes. For instance, the class of noetherian spectral spaces (Lemma~\ref{lem_noethclosed}), the class of spectral spaces all of whose points are visible (Lemma~\ref{lem_vis}), and the class of spectral spaces carrying the constructible topology and having Cantor-Bendixson rank are all subspace closed (Lemma~\ref{lem_CBclosed}) and see Definition~\ref{defn_cbrank} for details on Cantor-Bendixson rank).
\end{rem}

\begin{defn}
Let $\mathcal{X}$ be a subspace closed class of spectral spaces and 
\begin{displaymath}
\mathcal{D} = \{\dim_X\colon X\to \Ord \; \vert \; X\in \mathcal{X}\}
\end{displaymath}
a class of spectral dimension functions on the spaces in $\mathcal{X}$. We say $\mathcal{D}$ is \emph{compatible} with $\mathcal{X}$ if for all $X\in \mathcal{X}$, $x\in X$, and $\alpha \in \Ord$
\begin{displaymath}
\dim_X(x) = \alpha +1 \quad \text{if and only if} \quad \dim_{X>\alpha}(x) = 0.
\end{displaymath}
\end{defn}

\begin{rem}
In all of our examples compatibility will come for free as our dimension functions will be defined recursively, via some topological property, using the $X_{>\alpha}$.
\end{rem}

%Should try to figure out exactly what compatibility implies and if it is equivalent to something perhaps more natural - just for completeness

We now have enough terminology to state our main abstract result.

\begin{thm}\label{thm_abstract}
Let $\mathcal{X}$ be a subspace closed class of spectral spaces, such that for each $X\in \mathcal{X}$ every point of $X$ is visible, and let $\mathcal{D}$ be a compatible class of spectral dimension functions. Suppose for every rigidly-compactly generated tensor triangulated category $\sfT$ with $Y = \Spc \sfT^c \in \mathcal{X}$ we have for all $A\in \sfT$ that
\begin{displaymath}
\GGamma_{Y_{\leq 0}}A \in \Loc(\GGamma_xA \; \vert \; x\in Y_{\leq 0}).
\end{displaymath}
Then if $\sfS$ is any rigidly-compactly generated tensor triangulated category with a monoidal model and $\Spc \sfS^c\in \mathcal{X}$ we have
\begin{displaymath}
A \in \Loc( \GGamma_x A \; \vert \; x\in \supp A) \;\; \forall A\in \sfS.
\end{displaymath}
\end{thm}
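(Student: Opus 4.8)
The idea is to run an induction on the dimension $\dim Y$ of $Y = \Spc\sfS^c$ with respect to the chosen spectral dimension function $\dim_Y$. For the base case $\dim Y = 0$ we have $Y = Y_{\leq 0}$ and $\GGamma_{Y_{\leq 0}}\sfS = \sfS$, so the hypothesis gives directly that every $A\in\sfS$ lies in $\Loc(\GGamma_x A \mid x\in Y)$; intersecting with $\supp A$ is harmless since $\GGamma_x A = 0$ for $x\notin\supp A$. For the inductive step, fix $A\in\sfS$ and use the smashing localisation sequence attached to the Thomason subset $Y_{\leq\alpha}$ for each ordinal $\alpha$, giving a triangle $\GGamma_{Y_{\leq\alpha}}A \to A \to L_{Y_{\leq\alpha}}A \to$. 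The plan is to build $A$ from the bottom up: first show $\GGamma_{Y_{\leq 0}}A \in \Loc(\GGamma_x A \mid x\in Y_{\leq 0})$ (this is exactly the hypothesis applied to $\sfS$ itself), then inductively produce all the layers $\GGamma_{Y_{\leq\alpha}}A$, and finally assemble $A$ as a homotopy colimit of these layers. The model hypothesis on $\sfS$ is what lets us form these homotopy colimits of $\Ord$-indexed towers.

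The heart of the argument is the inductive construction of the layers. Suppose $\alpha$ is an ordinal and we know $\GGamma_{Y_{\leq\beta}}A \in \Loc(\GGamma_x A \mid x\in Y_{\leq\beta})$ for all $\beta < \alpha$. If $\alpha$ is a limit ordinal, then $Y_{\leq\alpha} = \bigcup_{\beta<\alpha} Y_{\leq\beta}$ and $\GGamma_{Y_{\leq\alpha}}A$ should be expressible as the homotopy colimit of the $\GGamma_{Y_{\leq\beta}}A$ along the canonical maps (since the idempotent $\GGamma_{Y_{\leq\alpha}}\mathbf 1$ is the homotopy colimit of the $\GGamma_{Y_{\leq\beta}}\mathbf 1$, by the corresponding statement for Thomason subsets written as increasing unions), so it lands in the relevant localising subcategory. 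If $\alpha = \beta + 1$ is a successor, consider the triangle
\begin{displaymath}
\GGamma_{Y_{\leq\beta}}A \to \GGamma_{Y_{\leq\alpha}}A \to C \to
\end{displaymath}
where $C$ is the cone. The key point is to identify $C$, up to the localising subcategory generated by pieces we already control, with an acyclisation computed in a localisation of $\sfS$: tensoring $\GGamma_{Y_{\leq\alpha}}A$ with $L_{Y_{\leq\beta}}\mathbf 1$ should give something that "lives over $Y_{>\beta}$", and there it is an object of $L_{Y_{\leq\beta}}\sfS$, a rigidly-compactly generated tensor triangulated category with $\Spc$ of its compacts being $Y_{>\beta}$ (a spectral subspace of $Y$, hence in $\mathcal X$, by subspace-closedness). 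Compatibility of $\mathcal D$ with $\mathcal X$ says that the dimension-$0$ part of $Y_{>\beta}$ is exactly the set of points of dimension $\beta+1=\alpha$ in $Y$, i.e.\ the relevant "layer" of points. Applying the hypothesis to the category $L_{Y_{\leq\beta}}\sfS$ (which, however, need \emph{not} have a monoidal model — see below), we learn that $C$ is built from the objects $\GGamma_x(\text{that object})$ for $x$ of dimension $\alpha$; relating these back to $\GGamma_x A$ using Remark~\ref{rem_bik6.2_uber} and the fact that $\GGamma_x\mathbf 1 \otimes L_{Y_{\leq\beta}}\mathbf 1 \cong \GGamma_x\mathbf 1$ for $\dim x = \alpha$, we conclude $C \in \Loc(\GGamma_x A \mid \dim x = \alpha)$, and then the triangle shows $\GGamma_{Y_{\leq\alpha}}A \in \Loc(\GGamma_x A \mid x\in Y_{\leq\alpha})$. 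Finally, $A = \hocolim_\alpha \GGamma_{Y_{\leq\alpha}}A$ (again because $\mathbf 1 = L_\varnothing\mathbf 1 = \GGamma_Y\mathbf 1$ when $\dim X$ is attained, and the tower of idempotents has homotopy colimit $\mathbf 1$), so $A\in\Loc(\GGamma_x A\mid x\in Y)$, and replacing $Y$ by $\supp A$ throws away the zero objects.

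The main obstacle I anticipate is the application of the abstract hypothesis to the intermediate categories $L_{Y_{\leq\beta}}\sfS$: these are rigidly-compactly generated tensor triangulated categories with spectrum of compacts in $\mathcal X$, so the hypothesis applies to \emph{them} as stated (the hypothesis does not ask for a model), but one has to be careful that $\GGamma^{\prime}_{(Y_{>\beta})_{\leq 0}}$ computed in $L_{Y_{\leq\beta}}\sfS$ agrees with $\GGamma_{Z}$ computed in $\sfS$ for the appropriate Thomason subset $Z$ of $Y$, and that the local idempotents $\GGamma_x$ match up under the localisation functor — this is the kind of bookkeeping that requires the compatibility of $\mathcal D$ with $\mathcal X$ and careful use of the results of \cite{BaRickard} on how acyclisation and localisation functors compose. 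A secondary technical point is justifying the homotopy-colimit identifications $\GGamma_{Y_{\leq\alpha}}\mathbf 1 = \hocolim_{\beta<\alpha}\GGamma_{Y_{\leq\beta}}\mathbf 1$ for limit $\alpha$ and $\mathbf 1 = \hocolim_\alpha \GGamma_{Y_{\leq\alpha}}\mathbf 1$, which is where the monoidal model of $\sfS$ is genuinely used and where one must check that the transition maps in these towers are the expected ones; once these identifications are in hand, tensoring with $A$ and using that $\Loc(\GGamma_x A)$ is closed under the coproducts and cones defining homotopy colimits finishes the assembly.
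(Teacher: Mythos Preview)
Your proposal is correct and follows essentially the same approach as the paper's proof: transfinite induction on $\alpha$ showing $\GGamma_{Y_{\leq\alpha}}A \in \Loc(\GGamma_x A \mid x\in Y_{\leq\alpha})$, with the successor step handled by applying the hypothesis to $L_{Y_{\leq\beta}}\sfS$ (whose spectrum is $Y_{>\beta}\in\mathcal{X}$ and whose dimension-zero locus is, by compatibility, the $(\beta+1)$-dimensional points of $Y$) and the limit step handled by the homotopy-colimit identification $\GGamma_{Y_{\leq\lambda}}A \cong \hocolim_{\kappa<\lambda}\GGamma_{Y_{\leq\kappa}}A$, which the paper isolates as a separate lemma (Lemma~\ref{lem_hocolim}). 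The only cosmetic difference is that the paper observes directly that $Y = Y_{\leq\beta}$ for some $\beta$, so $A \cong \GGamma_{Y_{\leq\beta}}A$ and no further homotopy colimit is needed at the end; your extra $\hocolim_\alpha$ step is harmless but redundant.
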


The theorem is, at first glance, perhaps somewhat abstruse. The intuition is that, given a rigidly-compactly generated tensor triangulated category $\sfS$ as in the theorem, the corresponding dimension function gives a filtration of $\Spc \sfS^c$ and hence of $\sfS$ (see \cite{BaFilt} and \cite{StevensonFilt} for more details on such filtrations). This filtration gives a functorial way of building any $A\in \sfS$ from the $\GGamma_xA$ by taking cones and homotopy colimits. The upshot of our compatibility assumption is that inspecting this process one sees that at each successor ordinal $\alpha +1$ a new approximation of $A$ is obtained by attaching copies of $\GGamma_xA$ for points $x$ which are of dimension $0$ in $X_{>\alpha}$. Thus one can deduce the $\GGamma_xA$ build $A$ provided one understands what happens at points of dimension $0$, which is precisely what we assume in the theorem. As we shall see in the examples this hypothesis is not unreasonable as for some naturally occurring dimension functions the $0$ dimensional points are topologically distinguished in some way and so easier to work with.

Before proving the theorem we need a technical lemma which is, in some sense, a generalisation of \cite{StevensonActions}*{Lemma~6.6}. As the proofs are rather similar we only sketch the argument here.

\begin{lem}\label{lem_hocolim}
Let $\sfS$ be a rigidly-compactly generated tensor triangulated category with a monoidal model and suppose $\dim$ is a spectral dimension function on $X = \Spc \sfS^c$. For a limit ordinal $\lambda$ there is, for every object $A$ in $\sfS$, an isomorphism
\begin{displaymath}
\underset{\kappa < \lambda}{\hocolim}\, \GGamma_{X_{\leq \kappa}} A \iso \GGamma_{X_{\leq \lambda}} A.
\end{displaymath}
\end{lem}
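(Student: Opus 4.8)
The plan is to build the natural comparison map and then identify the triangle it sits in with the localisation triangle of $A$ at $X_{\leq\lambda}$. Since $\lambda$ is a limit ordinal, $X_{\leq\lambda}=\bigcup_{\kappa<\lambda}X_{\leq\kappa}$ and the $X_{\leq\kappa}$ form an increasing chain of Thomason subsets. Each inclusion $X_{\leq\kappa}\cie X_{\leq\lambda}$ gives $\sfS^c_{X_{\leq\kappa}}\cie\sfS^c_{X_{\leq\lambda}}$, hence $\GGamma_{X_{\leq\kappa}}\sfS=\Loc(\sfS^c_{X_{\leq\kappa}})\cie\Loc(\sfS^c_{X_{\leq\lambda}})=\GGamma_{X_{\leq\lambda}}\sfS$; since $\GGamma_{X_{\leq\kappa}}A$ lies in the smaller of these smashing subcategories, its counit to $A$ factors uniquely through $\GGamma_{X_{\leq\lambda}}A\to A$, compatibly in $\kappa$, so that $(\GGamma_{X_{\leq\kappa}}A)_{\kappa<\lambda}$ together with these factorisations is a genuine diagram mapping to the constant diagram $\GGamma_{X_{\leq\lambda}}A$. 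Working on the monoidal model so that the $\GGamma_{X_{\leq\kappa}}$ are honest functors and these maps are strictly natural — exactly as in the proof of \cite{StevensonActions}*{Lemma~6.6} — I would apply $\hocolim_{\kappa<\lambda}$ to this natural transformation to obtain a canonical map $\Phi\colon\hocolim_{\kappa<\lambda}\GGamma_{X_{\leq\kappa}}A\to\GGamma_{X_{\leq\lambda}}A$. The goal is then to show $\Phi$ is an isomorphism.

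The one genuinely new ingredient, and the step I expect to be the main obstacle, is the identity
\begin{displaymath}
\sfS^c_{X_{\leq\lambda}}=\bigcup_{\kappa<\lambda}\sfS^c_{X_{\leq\kappa}},
\end{displaymath}
i.e.\ that any compact $t$ with $\supp t\cie X_{\leq\lambda}$ already has $\supp t\cie X_{\leq\kappa}$ for some $\kappa<\lambda$. I would prove this via the constructible topology: $\supp t$ is closed in $X$ with quasi-compact open complement, hence proconstructible and thus quasi-compact in $X^\mathrm{con}$ by Proposition~\ref{prop_cons}; each $X_{>\kappa}$ is an intersection of quasi-compact opens, hence proconstructible; so the sets $\supp t\cap X_{>\kappa}$ for $\kappa<\lambda$ form a decreasing chain of closed subsets of the quasi-compact space $\supp t$ (in its constructible topology) with total intersection $\supp t\cap X_{>\lambda}=\varnothing$, and quasi-compactness forces one of them to be empty. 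This is the analogue, in the present spectral generality, of the finiteness input in \cite{StevensonActions}*{Lemma~6.6}, where $\Spc\sfS^c$ was assumed noetherian.

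Granting this, I would conclude as follows. Applying $\hocolim_{\kappa<\lambda}$ (which sends diagrams of triangles to triangles) to the compatible family of localisation triangles $\GGamma_{X_{\leq\kappa}}A\to A\to L_{X_{\leq\kappa}}A$, and using that the homotopy colimit of the constant diagram at $A$ over $\lambda$ is $A$ (the indexing poset has a least element, hence contractible nerve), I get a triangle with middle term $A$, left term $\hocolim_{\kappa<\lambda}\GGamma_{X_{\leq\kappa}}A$, and right term $\hocolim_{\kappa<\lambda}L_{X_{\leq\kappa}}A$. The left term lies in $\GGamma_{X_{\leq\lambda}}\sfS=\Loc(\sfS^c_{X_{\leq\lambda}})$ because each $\GGamma_{X_{\leq\kappa}}A$ does and a localising subcategory of the homotopy category of a stable model category is closed under homotopy colimits. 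The right term is $X_{\leq\lambda}$-local: since $\sfS^c_{X_{\leq\lambda}}$ is a set of compact generators of $\GGamma_{X_{\leq\lambda}}\sfS$, it suffices that $\Hom(t,-)$ kill every suspension of $\hocolim_{\kappa<\lambda}L_{X_{\leq\kappa}}A$ for $t\in\sfS^c_{X_{\leq\lambda}}$; by the displayed identity such a $t$ lies in $\sfS^c_{X_{\leq\kappa}}$ for all $\kappa$ beyond some $\kappa_0$, so $\Hom(t,L_{X_{\leq\kappa}}A[i])=0$ for $\kappa\geq\kappa_0$, and compactness of $t$ identifies $\Hom(t,\hocolim_{\kappa<\lambda}L_{X_{\leq\kappa}}A[i])$ with $\colim_{\kappa<\lambda}\Hom(t,L_{X_{\leq\kappa}}A[i])$, which is zero. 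By uniqueness of the localisation triangle of $A$ for the smashing localisation at $X_{\leq\lambda}$, the triangle just produced is canonically isomorphic, compatibly with the maps to $A$, to $\GGamma_{X_{\leq\lambda}}A\to A\to L_{X_{\leq\lambda}}A$; as $\Phi$ is the unique map over $A$ from the left term to $\GGamma_{X_{\leq\lambda}}A$, it is this isomorphism.

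Beyond the topological identity, the facts I would invoke — that homotopy colimits are exact, commute with maps out of compact objects, and preserve localising subcategories, and the functoriality on the model that makes $\Phi$ meaningful — are all standard for the homotopy category of a stable monoidal model category, so the argument is a faithful adaptation of the proof of \cite{StevensonActions}*{Lemma~6.6} with only the constructible-compactness step replacing the use of noetherianity.
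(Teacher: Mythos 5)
Your argument is correct, but it departs from the paper's proof at both of its key steps, so a comparison is worthwhile. The paper gets the identity $\sfS^c_{X_{\leq\lambda}}=\bigcup_{\kappa<\lambda}\sfS^c_{X_{\leq\kappa}}$ in one line from Balmer's classification of thick tensor ideals: the union of the chain is itself a thick tensor ideal whose support is the Thomason subset $X_{\leq\lambda}$, so the bijection forces equality. You prove the same identity directly by compactness in the constructible topology ($\supp t$ is proconstructible, hence compact in $X^{\mathrm{con}}$; the $X_{>\kappa}$ are proconstructible; a decreasing chain of closed subsets with empty total intersection has an empty member), which is correct and more self-contained, at the cost of being longer. (One small point: the equality $X_{\leq\lambda}=\bigcup_{\kappa<\lambda}X_{\leq\kappa}$ uses condition (i) of the definition of a dimension function, namely that $\dim$ never takes the value $\lambda$, not merely that $\lambda$ is a limit ordinal.) For the endgame the paper completes the comparison map into $\GGamma_{X_{\leq\lambda}}A$ to a triangle with cone $Z$, observes $Z\in\GGamma_{X_{\leq\lambda}}\sfS$, and kills $Z$ by testing against the compact generators, using adjunction and the vanishing $\GGamma_{X_{\leq\kappa}}Z\iso 0$; you instead take the homotopy colimit of the whole family of localisation triangles over $A$ and identify the resulting triangle with the localisation triangle at $X_{\leq\lambda}$ by uniqueness, checking that the third term is local by testing against compacts. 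Both endgames need model-level facts about $\lambda$-indexed homotopy colimits: yours needs that maps out of compact objects commute with filtered homotopy colimits of arbitrary ordinal shape (true in the stable model/derivator setting, but strictly more than the purely triangulated $\omega$-telescope statement), while the paper's needs that the smashing functors commute with such hocolims together with a cofinality/eventual-constancy observation. The two routes are of comparable technical weight; yours has the mild advantage of producing the $X_{\leq\lambda}$-localisation triangle of $A$ directly, the paper's of outsourcing the topology to the classification theorem.
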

\begin{proof}
As $\dim$ does not take limit ordinal values we have
\begin{displaymath}
X_{\leq \lambda} = \bigcup_{\kappa < \lambda} X_{\leq \kappa},
\end{displaymath}
and since $\dim$ is spectral all of the subsets occuring in this expression are Thomason. Thus, by the classification of thick tensor ideals of $\sfS^c$ \cite{BaSpec}*{Theorem~4.10} we have
\begin{displaymath}
\sfS^c_{X_{\leq \lambda}} = \bigcup_{\kappa < \lambda} \sfS^c_{X_{\leq \kappa}}
\end{displaymath}
where the category on the right is a thick tensor ideal by virtue of being a union of a chain of such.
%by \cite{StevensonActions}*{Lemma~3.9} it is sufficient to take the thick closure on the right-hand side as each $\sfS^c_{X_\leq \kappa}$ is an ideal.
Hence the collection of objects
\begin{displaymath}
\{s\in \sfS^c \; \vert \; \exists\; \kappa < \lambda \;\text{with}\; \supp s \subseteq X_{\leq \kappa} \}
\end{displaymath}
generates $\GGamma_{X_{\leq \lambda}}\sfS$.

For $A\in \sfS$ consider the triangle
\begin{displaymath}
\underset{\kappa < \lambda}{\hocolim}\, \GGamma_{X_{\leq \kappa}} A \to \GGamma_{X_{\leq \lambda}} A \to Z \to \Sigma \,\underset{\kappa < \lambda}{\hocolim}\, \GGamma_{X_{\leq \kappa}} A.
\end{displaymath}
As $\GGamma_{X_{\leq \lambda}}\sfS$ is localising, and hence closed under homotopy colimits, the first two terms of the triangle are contained in $\GGamma_{X_{\leq \lambda}}$ and thus so is $Z$. In order to complete the proof it is sufficient to show that $Z\iso 0$ or equivalently that no object in the generating set for $\GGamma_{X_{\leq \lambda}}\sfS$ we have exhibited above has a non-zero morphism to $Z$. For $\kappa < \lambda$ and $s\in \sfS^c$ with $\supp s \subseteq X_{\leq \kappa}$ we have $s \iso \GGamma_{X_{\leq \kappa}}s$, so by adjunction
\begin{align*}
\Hom(s,Z) &\iso \Hom(\GGamma_{X_{\leq \kappa}}s, Z) \\
&\iso \Hom(s, \GGamma_{X_{\leq \kappa}}Z).
\end{align*}
One then proceeds, as in \cite{StevensonActions}*{Lemma~6.6}, by showing $\GGamma_{X_{\leq \kappa}}Z$ is zero.
\end{proof}

\begin{proof}[Proof of the theorem]
Let $\sfS$ be as in the theorem, set $X=\Spc\sfS^c \in \mathcal{X}$ and denote by $\dim_X$ the associated spectral dimension function. We prove, by transfinite induction, that for every $\alpha \in \Ord$ and $A\in \sfS$ we have
\begin{displaymath}
\GGamma_{X_{\leq \alpha}}A \in \Loc( \GGamma_xA \; \vert \; x\in X_{\leq \alpha}).
\end{displaymath}
By hypothesis the base case $\alpha = 0$ holds.

We begin with the case of successor ordinals: suppose the result holds for ordinals less than or equal to $\alpha$ and consider $X_{\leq \alpha+1}$. As $\dim_X$ is spectral the subset $X_{\leq \alpha}$ is Thomason and so, for $A\in \sfS$, we can consider the localisation triangle
\begin{displaymath}
\GGamma_{X_{\leq \alpha}}\GGamma_{X_{\leq \alpha+1}} A \to \GGamma_{X_{\leq \alpha+1}} A \to L_{X_{\leq \alpha}}\GGamma_{X_{\leq \alpha+1}} A \to \Sigma \GGamma_{X_{\leq \alpha}}\GGamma_{X_{\leq \alpha+1}} A
\end{displaymath}
for the smashing localisation associated to $X_{\leq \alpha}$. We view $L_{X_{\leq \alpha}}\GGamma_{X_{\leq \alpha+1}} A$ as an object of $L_{X_{\leq \alpha}}\sfS$. Note that, since $\mathcal{X}$ is subspace closed and $\Spc L_{X_{\leq \alpha}}\sfS^c \iso X_{> \alpha}$, the base case applies to $L_{X_{\leq \alpha}}\sfS$ as it is again a rigidly-compactly generated tensor triangulated category. In $L_{X_{\leq \alpha}}\sfS$ we have isomorphisms
\begin{displaymath}
L_{X_{\leq \alpha}}\GGamma_{X_{\leq \alpha+1}} A \iso \GGamma_{X_{\leq \alpha+1}}L_{X_{\leq \alpha}} A \iso \GGamma_{{X_{\leq \alpha+1}}\cap X_{> \alpha}}L_{X_{\leq \alpha}} A.
\end{displaymath}
By compatibility of the dimension functions on $\mathcal{X}$ we have
\begin{displaymath}
{X_{\leq \alpha+1}}\cap X_{>\alpha} = (\Spc L_{X_{\leq \alpha}}\sfS^c)_{\leq 0}
\end{displaymath}
so the hypothesis of the theorem applies to yield
\begin{displaymath}
\GGamma_{X_{\leq \alpha+1}}L_{X_{\leq \alpha}} A \in \Loc( \GGamma_x \GGamma_{X_{\leq \alpha+1}} L_{X_{\leq \alpha}}A \; \vert \; x\in {X_{\leq \alpha+1}}\cap X_{> \alpha}) = \Loc( \GGamma_x A \; \vert \; x\in {X_{\leq \alpha+1}}\cap X_{> \alpha}).
\end{displaymath}
On the other hand, by the inductive hypothesis, we have
\begin{displaymath}
\GGamma_{X_{\leq \alpha}}\GGamma_{X_{\leq \alpha+1}}A \iso \GGamma_{X_{\leq \alpha}}A \in \Loc( \GGamma_xA\; \vert \; x\in {X_{\leq \alpha}}) \subseteq \Loc(\GGamma_xA\; \vert \; x\in X_{\leq \alpha +1}).
\end{displaymath}
This shows that both $\GGamma_{X_{\leq \alpha}}\GGamma_{X_{\leq \alpha+1}}A$ and $L_{X_{\leq \alpha}}\GGamma_{X_{\leq \alpha+1}}A$  lie in $\Loc( \GGamma_xA\; \vert \;x\in {X_{\leq \alpha+1}})$ and so, by the localisation triangle, we deduce that $\GGamma_{X_{\leq \alpha+1}}A$ also lies in this localising subcategory as desired.

It remains to prove the induction step for limit ordinals. Suppose then $\lambda$ is a limit ordinal and the inductive hypothesis holds for $\kappa < \lambda$. For $A\in \sfS$ we have, by Lemma~\ref{lem_hocolim}, an isomorphism
\begin{displaymath}
\underset{\kappa < \lambda}{\hocolim}\, \GGamma_{X_{\leq \kappa}} A \iso \GGamma_{X_{\leq \lambda}} A.
\end{displaymath}
By inductive hypothesis each $\GGamma_{X_{\leq \kappa}}A$ lies in $\Loc( \GGamma_xA\; \vert \; x\in X_{\leq \lambda})$. Thus $\GGamma_{X_{\leq \lambda}} A$ is also lies in this subcategory as localising subcategories are closed under homotopy colimits.

The statement of the theorem now follows: as $X$ is a set there exists an ordinal $\beta$ such that $X = X_{\leq \beta}$ and one just applies, for $A\in \sfS$, what we have proved to 
\begin{displaymath}
\GGamma_{X_{\leq \beta}}A = \GGamma_XA \iso A.
\end{displaymath}
\end{proof}

\begin{rem}
One sees from the proof that one could, instead of considering all rigidly-compactly generated tensor triangulated categories with spectrum in $\mathcal{X}$, just consider some quotient closed collection of such tensor triangulated categories.

The theorem can also be weakened as follows: we can ask in the hypothesis only that for any rigidly-compactly generated tensor triangulated category $\sfT$ with $Y = \Spc \sfT^c \in \mathcal{X}$ then given any $A\in \sfT$ we have
\begin{displaymath}
\GGamma_{Y_{\leq 0}}A \in \Loc^\otimes( \GGamma_xA \; \vert \; x\in Y_{\leq 0})
\end{displaymath}
i.e.\ only asking that the local-to-global principle holds for dimension zero points rather than the stronger statement that the $\GGamma_xA$ generate $\GGamma_{Y_{\leq 0}}A$ without using the tensor product.
\end{rem}

The theorem immediately implies the local-to-global principle in the form that was recalled in Definition~\ref{defn_ltg}. As alluded to in the above remark it is in fact stronger as we have exhibited $A$ as an object of the localising subcategory generated by the $\GGamma_xA$ not just the tensor ideal they generate.

\begin{cor}\label{cor_ltg}
Suppose $\sfS$ is as in the theorem, whose conditions we assume to be verified. Then the local-to-global principle holds for $\sfS$. Moreover, for every $A\in \sfS$ we have $\supp A = \varnothing$ if and only if $A\iso 0$. Furthermore these properties hold for the action of $\sfS$ on any other compactly generated triangulated category.
\end{cor}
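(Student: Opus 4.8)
The plan is to derive all three assertions formally from Theorem~\ref{thm_abstract}. Applied to $\sfS$ itself, the theorem supplies, for every $A\in\sfS$, the membership $A\in\Loc(\GGamma_xA\mid x\in\supp A)$; since $\GGamma_xA\iso 0$ whenever $x\notin\supp A$, this can equally be written $A\in\Loc(\GGamma_xA\mid x\in\Spc\sfS^c)$. This single fact does almost all the work.

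For the local-to-global principle: since $\Loc(\GGamma_xA\mid x\in\Spc\sfS^c)\cie\Loc^\otimes(\GGamma_xA\mid x\in\Spc\sfS^c)$, the membership above gives $\Loc^\otimes(A)\cie\Loc^\otimes(\GGamma_xA\mid x\in\Spc\sfS^c)$. For the reverse inclusion, each $\GGamma_xA=\GGamma_x\mathbf{1}\otimes A$ lies in the localising tensor ideal $\Loc^\otimes(A)$, so that ideal contains the right-hand family and hence the localising tensor ideal it generates; this establishes the equality of Definition~\ref{defn_ltg}. For the detection statement, $A\iso 0$ trivially forces $\supp A=\varnothing$, and conversely if $\supp A=\varnothing$ then the theorem places $A$ in the localising subcategory generated by the empty family, i.e.\ $A\iso 0$.

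For the final clause, I would first apply the theorem to the unit object to obtain $\mathbf{1}\in\Loc(\GGamma_x\mathbf{1}\mid x\in\Spc\sfS^c)$. Now let $\sfS$ act on a compactly generated triangulated category $\sfR$ and fix $M\in\sfR$. The functor $-\otimes M\colon\sfS\to\sfR$ given by the action is exact and preserves coproducts, so the preimage under it of $\Loc(\GGamma_x\mathbf{1}\otimes M\mid x\in\Spc\sfS^c)$ is a localising subcategory of $\sfS$ containing each $\GGamma_x\mathbf{1}$, hence containing $\mathbf{1}$. Applying $-\otimes M$ then yields $M=\mathbf{1}\otimes M\in\Loc(\GGamma_x\mathbf{1}\otimes M\mid x\in\Spc\sfS^c)=\Loc(\GGamma_xM\mid x\in\supp M)$. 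From this generation statement, the local-to-global principle for the action (with the smallest localising $\sfS$-submodule in place of the localising tensor ideal) and the equivalence $\supp M=\varnothing\iff M\iso 0$ follow by repeating the two arguments of the previous paragraph verbatim, reading $M$ for $A$. I do not anticipate a genuine obstacle here: the whole corollary is formal, and the only points needing attention are that, in the action setup of \cite{StevensonActions}, $\GGamma_xM$ is by definition $\GGamma_x\mathbf{1}\otimes M$ and the action preserves coproducts in the $\sfS$-variable, after which the two formal arguments transport without change.
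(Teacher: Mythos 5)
Your proposal is correct and follows essentially the same route as the paper: the equality of localising tensor ideals is obtained exactly as in the paper's proof (Theorem~\ref{thm_abstract} plus $\Loc\subseteq\Loc^\otimes$ in one direction, $\GGamma_xA\in\Loc^\otimes(A)$ in the other). For the detection statement and the transfer to an action on another compactly generated category the paper simply cites \cite{StevensonActions}*{Theorem~6.9}, whereas you spell out the standard argument (support empty forces $A\in\Loc(\varnothing)=0$; apply the theorem to $\mathbf{1}$ and transport along the exact, coproduct-preserving functor $-\otimes M$), which is precisely the argument underlying that citation, so there is no genuine divergence.
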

\begin{proof}
Let $A\in \sfS$. Then, by the theorem, we have
\begin{displaymath}
A\in \Loc( \GGamma_xA\; \vert \; x\in \supp A) \subseteq \Loc^\otimes( \GGamma_xA\; \vert \; x\in \supp A).
\end{displaymath}
On the other hand it is immediate that $\GGamma_xA \iso \GGamma_x\mathbf{1}\otimes A \in \Loc^\otimes( A )$. We thus conclude
\begin{displaymath}
\Loc^\otimes( A ) = \Loc^\otimes( \GGamma_xA \; \vert \; x\in \supp A)
\end{displaymath}
i.e.\ the local-to-global principle holds as claimed.

One then proves the remaining statements exactly as in \cite{StevensonActions}*{Theorem~6.9} using the local-to-global principle.
\end{proof}

As an amusing aside we observe that the formulation of our result also allows one to remove the hypothesis of the existence of a model in certain cases.

\begin{cor}\label{cor_abstract}
Let $\mathcal{X}$ and $\mathcal{D}$ be a subspace closed class of spectral spaces and a compatible collection of dimension functions verifying the hypotheses of the theorem. If $\sfS$ is a rigidly-compactly generated tensor triangulated category with $\Spc \sfS^c \in \mathcal{X}$ and
\begin{displaymath}
\dim(\Spc \sfS^c) < \omega + \omega,
\end{displaymath}
then $A\in \Loc( \GGamma_x A \; \vert \; x\in \supp A )$ for objects $A$ of $\sfS$.
\end{cor}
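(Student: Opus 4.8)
The plan is to run the proof of Theorem~\ref{thm_abstract} again, watching carefully for where the assumption that $\sfS$ carries a monoidal model is genuinely used. It is used in exactly one place: the limit-ordinal step of the transfinite induction, which invokes Lemma~\ref{lem_hocolim} to identify $\GGamma_{X_{\leq\lambda}}A$ with $\hocolim_{\kappa<\lambda}\GGamma_{X_{\leq\kappa}}A$. The base case is the standing hypothesis, and the successor step uses only the smashing localisation triangle attached to the Thomason subset $X_{\leq\alpha}$, the hypothesis applied to the rigidly-compactly generated category $L_{X_{\leq\alpha}}\sfS$ (with $\Spc L_{X_{\leq\alpha}}\sfS^c\iso X_{>\alpha}\in\mathcal{X}$), and the inductive hypothesis\textemdash{}none of which needs a model.

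The key observation is that with $X=\Spc\sfS^c$ of dimension $<\omega+\omega$ the induction need only cross a single limit ordinal, namely $\omega$. Indeed, if $\beta<\omega+\omega$ is chosen with $X=X_{\leq\beta}$, then either $\beta<\omega$ and no limit ordinal is met at all, or $\omega\leq\beta<\omega+\omega$ and the only limit ordinal which is $\leq\beta$ is $\omega$, the next one being $\omega+\omega>\beta$. At $\lambda=\omega$ the homotopy colimit of Lemma~\ref{lem_hocolim} is the homotopy colimit of a \emph{sequence}
\begin{displaymath}
\GGamma_{X_{\leq 0}}A \longrightarrow \GGamma_{X_{\leq 1}}A \longrightarrow \GGamma_{X_{\leq 2}}A \longrightarrow \cdots,
\end{displaymath}
i.e.\ a countable telescope, which exists in any triangulated category with countable coproducts (so in particular in $\sfS$) as the cone of a canonical endomorphism $1-\sigma$ of $\coprod_{n<\omega}\GGamma_{X_{\leq n}}A$, with no appeal to a model.

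Accordingly I would proceed in two steps. First, I would verify that the proof of Lemma~\ref{lem_hocolim} goes through for $\lambda=\omega$ using only this sequential homotopy colimit: the properties of $\hocolim$ that the argument actually uses are that a localising subcategory is closed under it (immediate for telescopes, being cones of maps between coproducts) and that it admits a morphism to $\GGamma_{X_{\leq\omega}}A$ sitting in a triangle whose third term is killed by every $\GGamma_{X_{\leq n}}$, and both follow formally from the telescope description together with the identity of thick tensor ideals $\sfS^c_{X_{\leq\omega}}=\bigcup_{n<\omega}\sfS^c_{X_{\leq n}}$, exactly as in \cite{StevensonActions}*{Lemma~6.6}. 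Second, I would rerun the transfinite induction of Theorem~\ref{thm_abstract} verbatim, substituting this model-free version of Lemma~\ref{lem_hocolim} at the unique limit stage $\omega$; this yields $\GGamma_{X_{\leq\alpha}}A\in\Loc(\GGamma_xA\mid x\in X_{\leq\alpha})$ for all $\alpha$, and specialising to $\alpha=\beta$ gives $A\iso\GGamma_XA\in\Loc(\GGamma_xA\mid x\in X)=\Loc(\GGamma_xA\mid x\in\supp A)$ after the points with $\GGamma_xA=0$ are discarded.

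The one delicate point I anticipate is the first step: one must check that constructing the comparison map into $\GGamma_{X_{\leq\omega}}A$ and proving the cone vanishes do not quietly rely on the extra functoriality a model supplies. As all of this takes place along a countable chain, however, this should reduce to a routine use of the standard properties of B\"okstedt--Neeman homotopy colimits in triangulated categories.
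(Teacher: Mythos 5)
Your proposal is correct and follows essentially the same route as the paper: the dimension bound $\dim(\Spc\sfS^c)<\omega+\omega$ means the transfinite induction in Theorem~\ref{thm_abstract} crosses at most the single limit ordinal $\omega$, and the only use of the model\textemdash{}Lemma~\ref{lem_hocolim}\textemdash{}can there be replaced by the B\"okstedt--Neeman telescope, which exists in any triangulated category with small coproducts. Your write-up simply makes explicit the details the paper's two-sentence proof leaves implicit, so no further changes are needed.
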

\begin{proof}
Due to the dimension restriction on $\Spc \sfS^c$ one only needs homotopy colimits indexed by $\omega$ in order for the induction in the proof of the theorem to reach $\Spc \sfS^c$. Since a suitable theory of $\omega$-indexed homotopy colimits exists in any triangulated category with small coproducts the proof of the theorem goes through.
\end{proof}

\begin{rem}
Of course Corollary~\ref{cor_ltg} is also valid when the above corollary holds.
\end{rem}

We close the section with a final abstract lemma which gives a condition under which one can check the hypothesis of the theorem.

\begin{lem}\label{lem_theone}
Let $\mathcal{X}$ be a subspace closed class of spectral spaces and $\mathcal{D}$ a compatible class of spectral dimension functions. Suppose for any $X\in \mathcal{X}$ and $x\in X_{\leq 0}$ the subset $\{x\}$ is Thomason (so in particular closed). Then for any rigidly-compactly generated tensor triangulated category $\sfT$ with $X = \Spc \sfT^c \in \mathcal{X}$ we have
\begin{displaymath}
\GGamma_{X_{\leq 0}}A \in \Loc( \GGamma_xA \; \vert \; x\in X_{\leq 0})
\end{displaymath}
for every $A\in \sfT$.
\end{lem}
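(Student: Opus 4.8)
The plan is to prove the sharper statement that for every object $B$ of the (compactly generated) localising subcategory $\GGamma_{X_{\leq 0}}\sfT$ the natural morphism
\[
q\colon \coprod_{x\in X_{\leq 0}}\GGamma_x B \longrightarrow B,
\]
built from the counits $\GGamma_x B\to B$ of the acyclisations $\GGamma_{\{x\}}$ (recall that $\GGamma_x\mathbf 1\iso\GGamma_{\{x\}}\mathbf 1$ once $\{x\}$ is Thomason, by Remark~\ref{rem_bik6.2_uber}), is an isomorphism. Applied to $B=\GGamma_{X_{\leq 0}}A$, together with the identity $\GGamma_x\GGamma_{X_{\leq 0}}A\iso\GGamma_{\{x\}\cap X_{\leq 0}}A\iso\GGamma_x A$ valid for $x\in X_{\leq 0}$, this exhibits $\GGamma_{X_{\leq 0}}A$ as $\coprod_{x\in X_{\leq 0}}\GGamma_x A$, which certainly lies in $\Loc(\GGamma_x A\mid x\in X_{\leq 0})$, as required.

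The first and only substantive task is to understand $\GGamma_{X_{\leq 0}}\sfT$. Axioms (ii)--(iii) of a dimension function force every $x\in X_{\leq 0}$ to be a closed point of $X$, so the standing Thomason hypothesis says precisely that $\{x\}$ is closed with quasi-compact open complement, hence proconstructible. Using this I would show that any compact $s\in\sfT^c$ with $\supp s\subseteq X_{\leq 0}$ has \emph{finite} support: $\supp s$ is proconstructible, hence quasi-compact in the constructible topology (Proposition~\ref{prop_cons}); for a finite subset $F\subseteq\supp s$ the set $\supp s\setminus F=\supp s\cap\bigcap_{y\in F}(X\setminus\{y\})$ is proconstructible, being a finite intersection of proconstructible sets; these sets form a downward-directed family of closed subsets of $(\supp s)^{\mathrm{con}}$ with empty intersection, so by quasi-compactness some $\supp s\setminus F$ is empty, i.e.\ $\supp s=F$ is finite. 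A routine splitting argument then decomposes such an $s$ as $s\iso\bigoplus_{x\in\supp s}\GGamma_x s$ with each $\GGamma_x s$ a compact object of $\GGamma_{\{x\}}\sfT$: here one uses $\GGamma_{\mathcal V}\mathbf 1\otimes\GGamma_{\mathcal W}\mathbf 1\iso\GGamma_{\mathcal V\cap\mathcal W}\mathbf 1$, the dual identity for the $L_{\mathcal V}\mathbf 1$, and the vanishing of $\Hom(C,D)$ whenever $C\in\GGamma_{\mathcal W}\sfT$, $D\in\GGamma_{\mathcal V}\sfT$ with $\mathcal V\cap\mathcal W=\varnothing$. Consequently $\GGamma_{X_{\leq 0}}\sfT=\Loc(\sfT^c_{X_{\leq 0}})=\Loc(\mathcal G)$, where $\mathcal G=\bigcup_{x\in X_{\leq 0}}\sfT^c_{\{x\}}$ is a set of compact generators, each member of which lies in some $\GGamma_{\{x\}}\sfT$.

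Given this, the conclusion is formal. Fix $B\in\GGamma_{X_{\leq 0}}\sfT$, set $P=\coprod_{x\in X_{\leq 0}}\GGamma_x B$, and let $q\colon P\to B$ restrict to the counit on each summand. For $g\in\sfT^c_{\{x_0\}}\subseteq\mathcal G$ we have $g\in\GGamma_{\{x_0\}}\sfT$, so the adjunction between the inclusion $\GGamma_{\{x_0\}}\sfT\hookrightarrow\sfT$ and its right adjoint $\GGamma_{\{x_0\}}$ shows that post-composition with the counit is an isomorphism $\Hom(g[n],\GGamma_{x_0}B)\iso\Hom(g[n],B)$; moreover $\Hom(g[n],\GGamma_x B)\iso\Hom(g[n],\GGamma_{\{x_0\}}\GGamma_x B)=0$ for $x\neq x_0$, since $\GGamma_{x_0}\mathbf 1\otimes\GGamma_x\mathbf 1\iso\GGamma_{\varnothing}\mathbf 1=0$. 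As $g$ is compact, $\Hom(g[n],P)=\bigoplus_{x}\Hom(g[n],\GGamma_x B)=\Hom(g[n],\GGamma_{x_0}B)$, and under this identification $q_*$ is exactly the isomorphism above. Hence $\cone(q)$ lies in $\GGamma_{X_{\leq 0}}\sfT=\Loc(\mathcal G)$ and admits no nonzero morphism from any shift of a compact generator in $\mathcal G$, so $\cone(q)=0$ and $B\iso P$. Applying this to $B=\GGamma_{X_{\leq 0}}A$ finishes the argument.

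The one step I expect to require genuine work --- and the only place where the Thomason hypothesis on $X_{\leq 0}$ is really used --- is the finiteness of $\supp s$ for a compact $s$ supported on $X_{\leq 0}$; everything after that is bookkeeping with Rickard idempotents and adjunctions of the sort already carried out in the proof of Lemma~\ref{lem_hocolim}.
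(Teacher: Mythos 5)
Your proof is correct, and its engine is the same as the paper's: exhibit a set of compact generators of $\GGamma_{X_{\leq 0}}\sfT$ each of which lies in some $\GGamma_{\{x\}}\sfT$ with $x\in X_{\leq 0}$, then show the map $\coprod_{x}\GGamma_x A\to \GGamma_{X_{\leq 0}}A$ induced by the counits is an isomorphism by testing against (shifts of) these generators, using the adjunction for the smashing acyclisation, compactness to pull $\Hom$ inside the coproduct, and $\GGamma_{\{x_0\}}\GGamma_{\{x\}}=\GGamma_{\varnothing}=0$ for $x\neq x_0$; this is precisely the paper's argument, which likewise concludes with the stronger statement $\GGamma_{X_{\leq 0}}A\iso\coprod_x\GGamma_xA$. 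Where you diverge is in how you justify that $\bigcup_{x\in X_{\leq 0}}\sfT^c_{\{x\}}$ generates: the paper gets $\sfT^c_{X_{\leq 0}}=\Thick(\bigcup_x\sfT^c_{\{x\}})$ in one line from Balmer's classification of thick tensor ideals, since $X_{\leq 0}$ is the union of the Thomason subsets $\{x\}$, whereas you prove it by hand, first showing via quasi-compactness of $\supp s$ in the constructible topology that any compact $s$ with $\supp s\subseteq X_{\leq 0}$ has finite support, and then splitting $s\iso\bigoplus_{x\in\supp s}\GGamma_x s$. Your route is sound (the finiteness argument is correct, and the splitting is the same kind of idempotent manipulation the paper carries out in Lemma~\ref{lem_absflat1}) and it buys a sharper structural fact\textemdash{}every compact supported in $X_{\leq 0}$ is a finite direct sum of compacts supported at single points\textemdash{}but it is more work than needed: the classification theorem, already invoked elsewhere in the paper, gives the generation statement immediately, so your constructible-topology detour, while the most interesting part of your write-up, is dispensable for the lemma itself.
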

\begin{proof}
We have assumed each point in $X_{\leq 0}$ is a Thomason subset and so, by the classification of thick tensor ideals of $\sfT^c$ \cite{BaSpec}*{Theorem~4.10}, we see
\begin{displaymath}
\sfT^c_{X_{\leq 0}} = \Thick( \bigcup_{x\in X_{\leq 0}} \sfT^c_{\{x\}}).
\end{displaymath}
Thus, taken together, the objects of the $\sfT^c_{\{x\}}$ for $x\in X_{\leq 0}$ give a set (up to choosing isomorphism classes) of compact generators for $\GGamma_{X_{\leq 0}}\sfT$.

Let $A$ be an object of $\sfT$. For each point $x\in X_{\leq 0}$ we have a natural map 
\begin{displaymath}
\GGamma_x A = \GGamma_{\{x\}}A = \GGamma_{\mathcal{V}(x)}A \to A,
\end{displaymath}
inducing the first map in the following triangle in $\GGamma_{X_{\leq 0}}\sfT$
\begin{displaymath}
\coprod_{x\in X_{\leq 0}} \GGamma_xA \stackrel{f}{\to} \GGamma_{X_{\leq 0}}A \to Z \to \Sigma \coprod_{x\in X_{\leq 0}} \GGamma_xA.
\end{displaymath}
We will show $Z\iso 0$ proving the first map is an isomorphism and hence proving the lemma. Let $x$ be a point in $X_{\leq 0}$ and $t\in \sfT^c_{\{x\}}$. We have $t\iso \GGamma_xt$ so, by adjunction, every map $t \to \GGamma_{X_{\leq 0}}A$ factors uniquely via the natural map $\GGamma_xA \to \GGamma_{X_{\leq 0}}A$ and hence via the map $f$ in the above triangle. Similarly every map $t\to \coprod_{x\in X_{\leq 0}}\GGamma_xA$ factors uniquely via $\GGamma_xA$ and we thus see $f$ induces an isomorphism
\begin{displaymath}
\Hom(t, \coprod_{x\in X_{\leq 0}} \GGamma_xA) \stackrel{\sim}{\to} \Hom(t, \GGamma_{X_{\leq 0}}A).
\end{displaymath}
From this we deduce that $\Hom(t,Z) \iso 0$. We have already observed above that such $t$ give a set of generators for $\GGamma_{X_{\leq 0}}\sfT$ and so $Z\iso 0$ as claimed. Hence $f$ is an isomorphism
\begin{displaymath}
\GGamma_{X_{\leq 0}}A\cong \coprod_{x\in X_{\leq 0}} \GGamma_xA
\end{displaymath}
and the latter object certainly lies in $\Loc( \GGamma_xA \; \vert \; x\in X_{\leq 0})$.
\end{proof}

\begin{rem}
The lemma is phrased so as to be maximally compatible with the theorem rather than for maximal strength. All that is really needed is that $X=\Spc \sfT^c$ is equipped with a dimension function such that every point in $X_{\leq 0}$ is Thomason. Moreover, one concludes that $\mathit{\Gamma}_{X_{\leq 0}}A$ actually decomposes as a sum of the $\mathit{\Gamma}_xA$. 
\end{rem}

\section{Categories with noetherian spectrum}

This section is devoted to demonstrating that one deduces the local-to-global principle for rigidly-compactly generated tensor triangulated categories with noetherian spectrum and a monoidal model, as originally proved in \cite{StevensonActions}*{Theorem~6.9}, from Theorem~\ref{thm_abstract}. In fact, as previously noted, the version we state and prove here is slightly stronger: we show no tensor products are required to build an object from its local pieces and the need for a model is removed in many cases. This latter fact seems to be more of a curiosity than anything else, but could potentially be of future interest.

To begin let us introduce the dimension function we will be concerned with, namely a transfinite version of the Krull dimension (we note our definition is slightly different from the ordinal valued Krull dimension of \cite{GKrause_Kdim}).

\begin{defn}
Let $X$ be a noetherian spectral space. We define a function
\begin{displaymath}
\kdim_X\colon X \to \Ord
\end{displaymath}
in terms of the subsets $X_{\leq \alpha}$ by transfinite recursion. For $x\in X$ we set $\kdim_X(x) = 0$ if and only if $x$ is closed i.e.\ $X_{\leq 0}$ is just the set of closed points of $X$. For a limit ordinal $\lambda$ we set $X_{\leq \lambda} = \cup_{\kappa<\lambda} X_{\leq \kappa}$. Finally, given $\alpha \in \Ord$ such that $X_{\leq \alpha}$ has been defined we say $\kdim_X(x) = \alpha+1$ if and only if $x$ is a closed point in $X\setminus X_{\leq \alpha} = X_{>\alpha}$ with the subspace topology.
\end{defn}

It is straightforward to show this gives a family of dimension functions of the sort we have considered in the last section.

\begin{lem}
Let $X$ be a noetherian spectral space. Then $\kdim_X$ is a dimension function on $X$.
\end{lem}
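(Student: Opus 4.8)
\emph{The plan.} To show that $\kdim_X$ is a dimension function the plan is to check four things: that the recursion really assigns an ordinal to \emph{every} point of $X$, that no value is a limit ordinal, and then the monotonicity conditions (ii) and (iii). If we read $\kdim_X(x)$ as the least ordinal $\alpha$ with $x\in X_{\leq\alpha}$, then condition (i) will be automatic: were that least ordinal a limit $\lambda$, the equality $X_{\leq\lambda}=\bigcup_{\kappa<\lambda}X_{\leq\kappa}$ would already place $x$ in some $X_{\leq\kappa}$ with $\kappa<\lambda$. So the real content is well-definedness together with (ii) and (iii).

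\emph{Well-definedness.} The engine here is the elementary fact that every nonempty noetherian $T_0$ space has a closed point: the descending chain condition furnishes a minimal nonempty closed subset $Z$, and since $\overline{\{z\}}$ is a nonempty closed subset contained in $Z$ it equals $Z$ for every $z\in Z$; thus any two points of $Z$ lie in each other's closure, which the $T_0$ axiom forbids unless they coincide, so $Z$ is a single closed point. A subspace of a noetherian $T_0$ space is again noetherian and $T_0$, and spectral spaces are $T_0$ by definition, so this applies to each subspace $X_{>\alpha}$. Now suppose $\kdim_X$ were not total; then some point lies in $X_{>\alpha}$ for every ordinal $\alpha$, so each $X_{>\alpha}$ is nonempty and hence has a closed point $x_\alpha$, which by construction satisfies $\kdim_X(x_\alpha)=\alpha+1$. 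The $x_\alpha$ would then be pairwise distinct, yielding an injection from $\Ord$ into the set $X$, which is absurd. Hence $\kdim_X$ is defined on all of $X$.

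\emph{Properties (ii) and (iii).} I would first prove by transfinite induction that each $X_{\leq\alpha}$ is specialisation closed. This is clear for $\alpha=0$, since closed points have $\mathcal{V}(x)=\{x\}$, and for limit $\alpha$, since a union of specialisation closed sets is specialisation closed. For the successor step write $X_{\leq\alpha+1}=X_{\leq\alpha}\cup C$ with $C$ the set of closed points of $X_{>\alpha}$; for $x\in X_{\leq\alpha}$ the inductive hypothesis gives $\mathcal{V}(x)\subseteq X_{\leq\alpha}$, while for $x\in C$ and $y\in\mathcal{V}(x)$ either $y\in X_{\leq\alpha}$, or $y\in\mathcal{V}(x)\cap X_{>\alpha}$, and this last set is precisely the closure of $\{x\}$ in the subspace $X_{>\alpha}$, hence equals $\{x\}$, forcing $y=x$. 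Granting this, $\kdim_X(x)=\alpha$ gives $x\in X_{\leq\alpha}$, hence $\mathcal{V}(x)\subseteq X_{\leq\alpha}$, which is exactly (ii). Finally for (iii): if $y\in\mathcal{V}(x)$ and $\kdim_X(x)=\kdim_X(y)=\alpha$, then $\alpha$ is $0$ or a successor $\beta+1$; in the first case $\mathcal{V}(x)=\{x\}$, and in the second $x$ and $y$ both lie in $X_{>\beta}$ with $y$ in the closure of $\{x\}$ computed inside $X_{>\beta}$, where $\{x\}$ is closed, so again $y=x$.

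\emph{Main obstacle.} The only step with any bite is well-definedness, that is, producing closed points in the iterated subspaces $X_{>\alpha}$; it is worth stressing that this uses nothing beyond the noetherian and $T_0$ properties and their stability under passage to subspaces, and in particular not sobriety. Everything else runs on the single bookkeeping identity that the closure of a set in a subspace is its ambient closure intersected with that subspace.
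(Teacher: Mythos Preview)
Your argument is correct and the key observation for (ii) and (iii)\textemdash{}that $\kdim_X(x)=\alpha+1$ forces $\mathcal{V}(x)\cap X_{>\alpha}=\{x\}$\textemdash{}is exactly what the paper uses. The paper, however, deploys this observation directly: given $y\in\mathcal{V}(x)$ with $\kdim_X(x)=\alpha+1$, either $y=x$ or $y\notin X_{>\alpha}$, and the latter gives $\kdim_X(y)\leq\alpha<\kdim_X(x)$, dispatching (ii) and (iii) simultaneously in two lines. You instead route through a transfinite induction establishing that each $X_{\leq\alpha}$ is specialisation closed, which is in fact the content of the \emph{next} lemma in the paper; so you have effectively merged the two lemmas, at the cost of a slightly longer path to (ii) and (iii). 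On the other hand, you explicitly verify that $\kdim_X$ is total via the closed-point argument, a point the paper passes over in silence when it declares $\kdim_X$ a function; this is a genuine addition and your argument for it is clean.
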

\begin{proof}
By construction $\kdim_X$ does not take limit ordinal values. For the remaining two conditions suppose we are given $y\in \mathcal{V}(x)$ with $\kdim_X(x) = \alpha+1$. By definition this means $\mathcal{V}(x)\cap X_{>\alpha} = \{x\}$. So either $y=x$ or $y\neq x$ and then $y\notin X_{>\alpha}$ and thus $\kdim_X(y) < \kdim_X(x)$.
\end{proof}

\begin{lem}
Let $X$ be a noetherian spectral space. For every $\alpha \in \Ord$ the subspace $X_{\leq \alpha}$, defined in terms of $\kdim_X$, is Thomason and so $\kdim_X$ is a spectral dimension function.
\end{lem}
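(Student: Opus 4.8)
The plan is to reduce the statement to two facts that are already on the table. First, by Example~\ref{ex_thomason}, in a noetherian spectral space the Thomason subsets are exactly the specialisation closed ones, so it suffices to prove that $X_{\leq\alpha}$ is specialisation closed for every ordinal $\alpha$. Second, the preceding lemma tells us that $\kdim_X$ is a dimension function, so in particular it satisfies axiom (ii): if $y\in\mathcal{V}(x)$ then $\kdim_X(y)\leq\kdim_X(x)$.

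Granting these, the argument is immediate: fix $\alpha$, take $x\in X_{\leq\alpha}$ and $y\in\mathcal{V}(x)$; then $\kdim_X(y)\leq\kdim_X(x)\leq\alpha$, so $y\in X_{\leq\alpha}$. Hence $X_{\leq\alpha}$ is specialisation closed, and since $X$ is noetherian it is Thomason by Example~\ref{ex_thomason}. As $\alpha$ was arbitrary this is precisely the assertion that $\kdim_X$ is a spectral dimension function, by the definition of spectrality.

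If one prefers a self-contained argument that does not invoke the preceding lemma, one can instead run a direct transfinite induction on $\alpha$ proving that $X_{\leq\alpha}$ is specialisation closed. The base case $X_{\leq 0}$, the set of closed points, is trivially specialisation closed. For a successor, assume $X_{\leq\alpha}$ is specialisation closed; recall $X_{\leq\alpha+1}=X_{\leq\alpha}\cup\{x\in X_{>\alpha}\mid x\text{ is closed in }X_{>\alpha}\}$. Given $x\in X_{\leq\alpha+1}$ and $y\in\mathcal{V}(x)$: if $x\in X_{\leq\alpha}$ we are done by the inductive hypothesis; otherwise $x$ is a closed point of $X_{>\alpha}$, and if $y\in X_{>\alpha}$ then $y$ lies in the closure of $\{x\}$ in the subspace $X_{>\alpha}$, which is $\mathcal{V}(x)\cap X_{>\alpha}=\{x\}$, forcing $y=x$; and if $y\notin X_{>\alpha}$ then $y\in X_{\leq\alpha}\subseteq X_{\leq\alpha+1}$. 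For a limit ordinal $\lambda$ one has $X_{\leq\lambda}=\bigcup_{\kappa<\lambda}X_{\leq\kappa}$, a union of specialisation closed sets, hence specialisation closed. In every case Example~\ref{ex_thomason} upgrades ``specialisation closed'' to ``Thomason'', and spectrality of $\kdim_X$ follows.

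There is no real obstacle here; the only point requiring a moment of care is the routine identification of the closure of $\{x\}$ in the subspace $X_{>\alpha}$ with $\mathcal{V}(x)\cap X_{>\alpha}$, which is exactly what makes the successor step go through. Everything else is bookkeeping with the definitions of $\kdim_X$ and of a Thomason subset.
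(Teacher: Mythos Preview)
Your proposal is correct. Your second, self-contained transfinite induction is exactly the paper's proof, down to the identification $\mathcal{V}(x)\cap X_{>\alpha}=\{x\}$ in the successor step; your first argument is a mild but genuine shortcut the paper does not take, observing that axiom~(ii) of a dimension function (already secured by the preceding lemma) makes every $X_{\leq\alpha}$ specialisation closed in one line, so no separate induction is needed.
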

\begin{proof}
As $X$ is noetherian it is enough to show $X_{\leq \alpha}$ is specialisation closed. We proceed by transfinite induction. Clearly $X_{\leq 0}$, which is just the set of closed points of $X$, is specialisation closed which verifies the base case. For the successor ordinal case suppose $X_{\leq \alpha}$ is specialisation closed. We have
\begin{displaymath}
X_{\leq \alpha+1} = X_{\leq \alpha} \cup \{x\in X_{>\alpha} \; \vert \; x \; \text{closed in} \; X_{>\alpha}\},
\end{displaymath}
which is specialisation closed: $x$ is closed in $X_{>\alpha}$ if and only if $\mathcal{V}(x)\cap X_{>\alpha} = \{x\}$ i.e., every specialisation of $x$ lies in the specialisation closed subset $X_{\leq \alpha} \subseteq X_{\leq \alpha+1}$. Finally the limit ordinal case is clear as any union of specialisation closed subsets is specialisation closed.
\end{proof}

In fact it is also easy to see the class of noetherian spectral spaces is subspace closed and the Krull dimension gives a compatible family of dimension functions.

\begin{lem}\label{lem_noethclosed}
If $X$ is a noetherian spectral space and $i\colon Y\to X$ is a spectral subspace then $Y$ is noetherian. In other words, the class of noetherian spectral spaces is subspace closed.
\end{lem}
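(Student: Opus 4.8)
The plan is to note first that, since $i\colon Y\to X$ is by hypothesis a spectral subspace, $Y$ equipped with the subspace topology is already a spectral space; hence it only remains to check that the underlying topological space of $Y$ is noetherian, i.e.\ that $Y$ satisfies the descending chain condition on closed subsets. In fact I would observe that this step needs nothing about spectrality at all: \emph{any} subspace of a noetherian topological space is noetherian, and the lemma then follows by combining this with the spectrality of $Y$ which is part of the hypothesis.

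To prove that general fact, recall that every closed subset of $Y$ has the form $C\cap Y$ for some closed subset $C$ of $X$, and that the closure of a subset $S\subseteq Y$ in the subspace topology is $\overline{S}\cap Y$, where $\overline{S}$ denotes the closure of $S$ taken in $X$. Now suppose $C_1\cap Y \supseteq C_2\cap Y\supseteq\cdots$ is a descending chain of closed subsets of $Y$. Taking closures in $X$ yields a descending chain $\overline{C_1\cap Y}\supseteq \overline{C_2\cap Y}\supseteq\cdots$ of closed subsets of $X$, which must stabilise since $X$ is noetherian. Intersecting back with $Y$, and using that each $C_i\cap Y$, being closed in $Y$, coincides with $\overline{C_i\cap Y}\cap Y$, we conclude that the original chain stabilises too. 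Thus $Y$ is noetherian, and being also spectral it is a noetherian spectral space, which is exactly the assertion of Lemma~\ref{lem_noethclosed}.

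Alternatively I could phrase things via quasi-compactness: a spectral space is noetherian precisely when every one of its open subsets is quasi-compact; every open subset of $Y$ is the trace $U\cap Y$ of an open $U$ of $X$; such a $U$ is automatically quasi-compact because $X$ is noetherian; and the trace on the spectral subspace $Y$ of a quasi-compact open of $X$ is quasi-compact, which is precisely what is established in the course of the proof of Proposition~\ref{prop_cons}. Either way, there is no genuine obstacle; the only point deserving a moment's care is the compatibility of closure (respectively, of quasi-compactness of quasi-compact opens) with passage to a subspace, which is routine.
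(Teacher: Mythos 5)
Your proposal is correct, and your main argument takes a genuinely different (and more elementary) route than the paper. The paper's proof uses the characterisation that a space is noetherian if and only if every open subset is quasi-compact: an open $U\subseteq Y$ is $i^{-1}(U')$ for some open $U'\subseteq X$, noetherianity of $X$ makes $U'$ quasi-compact, and spectrality of $i$ then makes $U$ quasi-compact. Your primary argument instead proves the purely topological fact that \emph{any} subspace of a noetherian space is noetherian, via the descending chain condition on closed subsets (taking closures in $X$, letting the chain stabilise there, and intersecting back with $Y$ using $C_i\cap Y=\overline{C_i\cap Y}\cap Y$); the chain argument is correct, and it has the mild virtue of making clear that the spectrality of the inclusion plays no role in noetherianity itself, only in guaranteeing that $Y$ is again a spectral space. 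What the paper's approach buys is brevity within the conventions already set up: given the definition of a spectral map as one with quasi-compact preimages of quasi-compact opens, the conclusion is a one-line consequence, with no need to manipulate closures. Your alternative sketch via quasi-compactness is essentially the paper's proof; the only cosmetic difference is that you justify quasi-compactness of $U\cap Y$ by appealing to the proconstructibility argument inside the proof of Proposition~\ref{prop_cons}, whereas it is more direct to note $U\cap Y=i^{-1}(U')$ and invoke the definition of a spectral map. Either way the lemma is established.
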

\begin{proof}
The space $Y$ is noetherian if and only if every open subset of $Y$ is quasi-compact. Let $U$ be an open subset of $Y$. Then there exists an open $U'$ in $X$ such that $U = i^{-1}(U')$. As $X$ is noetherian $U'$ is quasi-compact and since $i$ is spectral this implies $U$ is also quasi-compact. Since $U$ was arbitrary this shows $Y$ is noetherian.
\end{proof}

\begin{lem}\label{lem_noeth_dim}
Let $\mathcal{X}$ be the subspace closed class of noetherian spectral spaces. Then $\{\kdim_X \; \vert \; X\in \mathcal{X}\}$ is a class of spectral dimension functions compatible with $\mathcal{X}$.
\end{lem}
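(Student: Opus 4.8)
The plan is to observe that, of the three conditions bundled into this assertion, two are already in hand: subspace-closedness of $\mathcal{X}$ is precisely Lemma~\ref{lem_noethclosed}, while the statement that each $\kdim_X$ is a spectral dimension function is exactly the content of the two lemmas immediately preceding it. So all that remains is to verify the compatibility condition, namely that for every $X\in\mathcal{X}$, every $x\in X$, and every $\alpha\in\Ord$ one has $\kdim_X(x)=\alpha+1$ if and only if $\kdim_{X_{>\alpha}}(x)=0$.

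First I would check that $\kdim_{X_{>\alpha}}$ is even legitimately defined: the earlier lemma identifying $X_{>\alpha}$, equipped with the subspace topology, as a spectral subspace of $X$, together with Lemma~\ref{lem_noethclosed}, shows that $X_{>\alpha}$ is again noetherian, hence lies in $\mathcal{X}$ and carries its own transfinite Krull dimension function. The desired equivalence is then obtained by matching the two defining clauses. Unwinding the recursion, $\kdim_X(x)=\alpha+1$ holds precisely when $x\in X_{>\alpha}$ and $x$ is a closed point of $X_{>\alpha}$ in the subspace topology; and $\kdim_{X_{>\alpha}}(x)=0$ holds, by the base clause of the definition applied to the space $X_{>\alpha}$, precisely when $x$ is a closed point of $X_{>\alpha}$. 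Since the subspace topology $X_{>\alpha}$ inherits from $X$ is intrinsic to $X_{>\alpha}$, and since being a closed point is a purely topological property, these two clauses express the same condition, and compatibility follows.

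I do not expect any genuine obstacle here; the proof is a short verification. The only point that warrants a moment's care is the notational bookkeeping: one should confirm that the set $X\setminus X_{\leq\alpha}$ appearing inside the recursive definition of $\kdim_X$ is the very same space, topology included, as the spectral subspace $X_{>\alpha}$ on which $\kdim_{X_{>\alpha}}$ is computed, and that a point closed in $X_{>\alpha}$ viewed as a subspace of $X$ is also closed in $X_{>\alpha}$ regarded on its own. Both facts are immediate.
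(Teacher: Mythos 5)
Your proof is correct and follows essentially the same route as the paper: the paper's proof simply cites the two preceding lemmas for the spectral dimension function claim and declares compatibility ``immediate from the definition,'' which is exactly the unwinding you carry out explicitly (closed points of $X_{>\alpha}$ in the subspace topology are precisely the points of Krull dimension $\alpha+1$ in $X$ and of dimension $0$ in $X_{>\alpha}$). Your extra care that $X_{>\alpha}$ is a noetherian spectral subspace, so that $\kdim_{X_{>\alpha}}$ is defined, is a reasonable elaboration, not a departure.
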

\begin{proof}
For each $X\in \mathcal{X}$ the last two lemmas show $\kdim_X$ is a spectral dimension function on $X$ and the compatibility condition is immediate from the definition of $\kdim_X$.
\end{proof}

We also easily verify that our criterion Theorem~\ref{thm_abstract} applies.

\begin{lem}\label{lem_noeth_base}
Let $\sfT$ be a rigidly-compactly generated tensor triangulated category with noetherian spectrum $\Spc\sfT^c = X$. Then for all $A\in \sfT$ we have
\begin{displaymath}
\GGamma_{X_{\leq 0}}A \in \Loc( \GGamma_xA\; \vert \; x\in X_{\leq 0}),
\end{displaymath}
where $X_{\leq 0}$ is defined with respect to $\kdim_X$.
\end{lem}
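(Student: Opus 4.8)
The plan is to deduce this directly from Lemma~\ref{lem_theone}, whose hypotheses are essentially tailored to this situation. First I would recall that, by the very definition of $\kdim_X$, the subset $X_{\leq 0}$ is precisely the set of closed points of $X$. The one point that needs checking is that each such singleton is a Thomason subset: given a closed point $x$, the set $\{x\}$ is closed by assumption, and its complement $X\setminus\{x\}$ is open; since $X$ is noetherian every open subset is quasi-compact, so $X\setminus\{x\}$ is a quasi-compact open and hence $\{x\}$ is Thomason.

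With this in hand the remaining ingredients have already been assembled: by Lemma~\ref{lem_noethclosed} the class $\mathcal{X}$ of noetherian spectral spaces is subspace closed, and by Lemma~\ref{lem_noeth_dim} the family $\{\kdim_X \; \vert \; X\in\mathcal{X}\}$ is a compatible class of spectral dimension functions. Thus $\mathcal{X}$ and $\mathcal{D} = \{\kdim_X\}$ satisfy the standing assumptions of Lemma~\ref{lem_theone}, and we have just verified that for any $X\in\mathcal{X}$ and $x\in X_{\leq 0}$ the subset $\{x\}$ is Thomason. Applying Lemma~\ref{lem_theone} to $\sfT$ then gives $\GGamma_{X_{\leq 0}}A \in \Loc(\GGamma_xA \; \vert \; x\in X_{\leq 0})$ for every $A\in\sfT$, as required. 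In fact the proof of Lemma~\ref{lem_theone} yields the sharper statement that $\GGamma_{X_{\leq 0}}A$ decomposes as the coproduct $\coprod_{x\in X_{\leq 0}}\GGamma_xA$.

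There is essentially no obstacle here; the only thing to be careful about is that noetherianness is genuinely used, and the place it enters is exactly the Thomason-ness of singletons: in a general spectral space a closed point need not have quasi-compact open complement, so $\{x\}$ need not be Thomason. I would also remark that no model is required for this lemma, since Lemma~\ref{lem_theone} imposes no such hypothesis; the model assumption only re-enters when this base case is fed into Theorem~\ref{thm_abstract}.
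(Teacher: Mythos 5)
Your proof is correct and follows essentially the same route as the paper: identify $X_{\leq 0}$ with the closed points, observe that noetherianness makes each singleton $\{x\}$ Thomason (the paper phrases this as every closed subset being Thomason when $X$ is noetherian), and then apply Lemma~\ref{lem_theone}. Your additional remarks about the coproduct decomposition and the absence of a model hypothesis are consistent with the remarks in the paper.
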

\begin{proof}
By definition of the Krull dimension $X_{\leq 0}$ consists precisely of the closed points of $X$. As $X$ is noetherian every closed subset is Thomason and so, in particular, for every $x\in X_{\leq 0}$ the subset $\{x\}$ is Thomason. Thus we can conclude by applying Lemma~\ref{lem_theone}. 
\end{proof}

Applying Theorem~\ref{thm_abstract} and Corollary~\ref{cor_abstract} we obtain the following result (cf.\ \cite{StevensonActions}*{Theorem~6.9}).

\begin{thm}\label{thm_noeth}
Suppose $\sfT$ is a rigidly-compactly generated tensor triangulated category with noetherian spectrum. Assume $\sfT$ satisfies at least one of the following two conditions:
\begin{itemize}
\item[(1)] $\sfT$ has a monoidal model;
\item[(2)] $\kdim_{\Spc \sfT^c}\Spc \sfT^c < \omega + \omega.$
\end{itemize}
Then for every object $A$ of $\sfT$ we have
\begin{displaymath}
A \in \Loc( \GGamma_xA \; \vert \; x\in \supp A ).
\end{displaymath}
In particular, the local-to-global principle holds for the action of $\sfT$ on itself and for every $A\in \sfT$ we have $\supp A = \varnothing$ if and only if $A\iso 0$. Furthermore these properties hold for the action of $\sfT$ on any other compactly generated triangulated category.
\end{thm}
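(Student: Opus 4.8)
The plan is to recognise that essentially all the work has been carried out in the lemmas of this section, so that the theorem follows by assembling them and feeding them into the abstract machinery of Theorem~\ref{thm_abstract} and its corollaries. Concretely, I would take $\mathcal{X}$ to be the class of all noetherian spectral spaces and $\mathcal{D} = \{\kdim_X \mid X\in\mathcal{X}\}$ the associated class of Krull dimension functions, then apply Theorem~\ref{thm_abstract} under hypothesis~(1) and Corollary~\ref{cor_abstract} under hypothesis~(2). To do this one must verify four things: that $\mathcal{X}$ is subspace closed, that every point of every $X\in\mathcal{X}$ is visible, that $\mathcal{D}$ is a compatible class of spectral dimension functions, and that the base-case condition $\GGamma_{X_{\leq 0}}A \in \Loc(\GGamma_x A \mid x\in X_{\leq 0})$ holds for every rigidly-compactly generated tensor triangulated category $\sfT$ with $\Spc\sfT^c = X\in\mathcal{X}$ and every $A\in\sfT$.

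Three of these are immediate: Lemma~\ref{lem_noethclosed} gives that $\mathcal{X}$ is subspace closed, Lemma~\ref{lem_noeth_dim} gives that $\mathcal{D}$ is a compatible class of spectral dimension functions, and Lemma~\ref{lem_noeth_base} verifies the base case. The one remaining point, visibility of all points, I would settle by hand. In a noetherian spectral space the Thomason subsets are exactly the specialisation closed ones by Example~\ref{ex_thomason}, so given $x\in X$ one may take $\mathcal{V} = \mathcal{V}(x)$ and $\mathcal{W} = \mathcal{V}(x)\setminus\{x\}$: the first is closed, and the second is specialisation closed, since if $y\in\mathcal{V}(x)\setminus\{x\}$ and $z\in\mathcal{V}(y)$ then $z\in\mathcal{V}(x)$, while $z=x$ would force $x=y$ by the $T_0$ axiom. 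Hence both are Thomason and $\{x\} = \mathcal{V}\setminus(\mathcal{V}\cap\mathcal{W})$ witnesses the visibility of $x$.

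Granting these checks, under hypothesis~(1) Theorem~\ref{thm_abstract} applies with its category ``$\sfS$'' taken to be $\sfT$, and under hypothesis~(2) Corollary~\ref{cor_abstract} applies, the dimension bound required there being literally condition~(2) since the relevant dimension function is $\kdim$. Either way one obtains $A\in\Loc(\GGamma_x A \mid x\in\supp A)$ for every $A\in\sfT$. The remaining assertions, namely the local-to-global principle for the action of $\sfT$ on itself, the statement that $\supp A = \varnothing$ if and only if $A\iso 0$, and the corresponding facts for the action of $\sfT$ on any compactly generated triangulated category, then follow at once from Corollary~\ref{cor_ltg}, whose hypotheses are exactly what we have just verified.

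I do not anticipate a serious obstacle here: the substantive content lives in Theorem~\ref{thm_abstract} and in the preparatory lemmas, and the present argument merely glues them together. The only step that genuinely requires an argument rather than a citation is the visibility of all points of a noetherian spectral space, which is in any case the minimal input needed for the objects $\GGamma_x\mathbf{1}$ to be defined in the first place.
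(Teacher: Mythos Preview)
Your proposal is correct and follows essentially the same route as the paper: assemble Lemmas~\ref{lem_noethclosed}, \ref{lem_noeth_dim}, and \ref{lem_noeth_base}, then invoke Theorem~\ref{thm_abstract} or Corollary~\ref{cor_abstract} according to which hypothesis holds, and finish via Corollary~\ref{cor_ltg}. The only addition you make is the explicit verification that every point of a noetherian spectral space is visible, which the paper leaves implicit; your argument for this is correct.
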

\begin{proof}
As stated before the theorem, given the results of this section one can immediately apply either Theorem~\ref{thm_abstract} or Corollary~\ref{cor_abstract}. As noted in Corollary~\ref{cor_ltg} the usual statement of the local-to-global principle follows immediately. One then proves the remaining statements exactly as in \cite{StevensonActions}*{Theorem~6.9}.
\end{proof}

\section{Categories with constructible spectrum}
We now provide a new application of Theorem~\ref{thm_abstract} generalising \cite{StevensonAbsFlat}*{Lemma~4.20} and extending the classification result obtained in said paper to derived categories of arbitrary semi-artinian absolutely flat rings. Throughout we make a great deal of use of the constructible topology and related concepts; some reminders on these concepts can be found in Section~\ref{sec_spectralprelims}. We start with some recollections on the Cantor-Bendixson rank of a space.

\begin{defn}\label{defn_cbrank}
Let $X$ be a spectral space. Denote by $X_{\leq 0}$ the set of isolated, i.e.\ open, points in $X$ and let $X_{>0} = X\setminus X_{\leq 0}$. Suppose $X_{\leq \alpha}$ has been defined and denote by $X_{>\alpha}$ the complement of $X_{\leq \alpha}$. We define
\begin{displaymath}
X_{\leq \alpha+1} = X_{\leq \alpha} \cup \{x\in X_{>\alpha}\; \vert \; x \; \text{is open in} \; X_{> \alpha}\}.
\end{displaymath}
For a limit ordinal $\lambda$ we set
\begin{displaymath}
X_{\leq \lambda} = \bigcup_{\kappa < \lambda} X_{\leq \kappa}.
\end{displaymath}
If there is an ordinal $\alpha$ such that $X_{\leq \alpha} = X$ then the least such ordinal $\beta$ is the \emph{Cantor-Bendixson rank} of $X$. In this case we write $\cbrk X = \beta$ and say $X$ has Cantor-Bendixson rank. If there is no such ordinal we say the Cantor-Bendixson rank of $X$ is undefined.

If $X$ has Cantor-Bendixson rank this gives rise to a dimension function on $X$ by setting, for $x\in X$,
\begin{displaymath}
\cbrk(x) = \min\{\alpha \in \Ord \; \vert \; x\notin X_{>\alpha}\}.
\end{displaymath}
We note that compatibility of $\cbrk$ with the class of spectral spaces with Cantor-Bendixson rank is immediate from the definition.
\end{defn}

\begin{lem}\label{lem_CBclosed}
If $X$ is a spectral space with Cantor-Bendixson rank and $i\colon Y\to X$ is a spectral subspace then $Y$ has Cantor-Bendixson rank. In other words, the class of spectral spaces with defined Cantor-Bendixson rank is subspace closed.
\end{lem}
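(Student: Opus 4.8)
The plan is to mimic the strategy of Lemma~\ref{lem_noethclosed} and Lemma~\ref{lem_vis}: characterise Cantor-Bendixson rank by a property of open subsets that transfers along spectral maps, and then run the transfinite recursion defining the rank in parallel on $X$ and on $Y$. The key observation I would establish first is a comparison between the derived subspaces of $X$ and of $Y$. Write $X^{(\alpha)} = X_{>\alpha-1}$ (or work directly with the filtration $X_{>\alpha}$ as in Definition~\ref{defn_cbrank}); I claim that $Y_{>\alpha}$, computed inside $Y$, is contained in $i^{-1}(X_{>\alpha})$ for every ordinal $\alpha$. Equivalently, every isolated point of $i^{-1}(X_{>\alpha})$ remains isolated when we pass to the smaller space $Y_{>\alpha}$, so the Cantor-Bendixson process on $Y$ ``outpaces'' the one on $X$. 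Granting this, since $X$ has rank $\beta$ we get $X_{>\beta} = \varnothing$, hence $Y_{>\beta} \subseteq i^{-1}(X_{>\beta}) = \varnothing$, so $Y$ has Cantor-Bendixson rank, bounded above by $\beta$.

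To prove the containment $Y_{>\alpha} \subseteq i^{-1}(X_{>\alpha})$ I would induct on $\alpha$. The base case $\alpha = 0$ is the statement that if $i(y)$ is isolated in $X$ then $y$ is isolated in $Y$, which is immediate since $\{i(y)\}$ open in $X$ pulls back to $\{y\}$ open in $Y$ (here I only need continuity of $i$, not spectrality). For the successor step, suppose $Y_{>\alpha} \subseteq i^{-1}(X_{>\alpha})$; I must show that if $y \notin Y_{\leq \alpha+1}$ then $i(y) \notin X_{\leq \alpha+1}$, i.e.\ that a point of $X_{>\alpha}$ which becomes open in $X_{>\alpha}$ has the property that its preimage point (if it lies in $Y_{>\alpha}$) becomes open in $Y_{>\alpha}$. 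This is where I restrict the map: $i$ restricts to a continuous map $Y_{>\alpha} \to X_{>\alpha}$ (using the inductive hypothesis to see $Y_{>\alpha} \subseteq i^{-1}(X_{>\alpha})$, hence $i(Y_{>\alpha}) \subseteq X_{>\alpha}$), and openness of a point is preserved under continuous preimage exactly as in the base case. The contrapositive then gives the desired containment of the $X_{>\alpha+1}$ and $Y_{>\alpha+1}$. The limit step is formal: $Y_{>\lambda} = \bigcap_{\kappa<\lambda} Y_{>\kappa} \subseteq \bigcap_{\kappa<\lambda} i^{-1}(X_{>\kappa}) = i^{-1}(X_{>\lambda})$.

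The main subtlety, and the place I would be most careful, is the bookkeeping in the successor step: one must be sure that ``open in $Y_{>\alpha}$'' is being tested in the right subspace, and that the inductive hypothesis really does give $i(Y_{>\alpha}) \subseteq X_{>\alpha}$ so that the restricted map makes sense. In fact spectrality of $i$ is not needed for this argument at all — only continuity — so I would remark that the lemma holds for an arbitrary continuous injection (or even just a continuous map) from $Y$ to a spectral space of defined Cantor-Bendixson rank, which is why Cantor-Bendixson rank behaves so robustly. For consistency with the surrounding discussion I would phrase the conclusion exactly as the statement does, noting that the dimension function $\cbrk$ on $Y$ is then the one induced by this filtration and that, combined with the already-observed compatibility, this exhibits the class of spectral spaces with Cantor-Bendixson rank together with $\{\cbrk_X\}$ as a subspace closed class with a compatible family of dimension functions, ready to be fed into Theorem~\ref{thm_abstract}.
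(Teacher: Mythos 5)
Your argument is correct, and it takes a cleaner route than the paper's. The paper proves the weaker-looking (in fact formally stronger, but harder to set up) comparison that for every ordinal $\alpha$ there is some $\alpha'\geq\alpha$ with $Y_{>\alpha}\subseteq X_{>\alpha'}$, which forces it to track an auxiliary ordinal through the transfinite induction (finding the least stage of $X$ meeting $Y$, arguing it is a successor, and re-indexing via $(X_{>\alpha'})_{>\alpha''-1}=X_{>\alpha'+\alpha''-1}$ at successor steps). You instead prove the level-by-level containment $Y_{>\alpha}\subseteq Y\cap X_{>\alpha}$ directly, using only the observation that a point isolated in $X_{>\alpha}$ stays isolated in the subspace $Y_{>\alpha}$ (the witnessing open $U\subseteq X$ with $U\cap X_{>\alpha}=\{i(y)\}$ restricts, via the inductive hypothesis $Y_{>\alpha}\subseteq Y\cap X_{>\alpha}$, to $U\cap Y_{>\alpha}=\{y\}$), together with the formal limit step. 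Both proofs conclude $Y_{>\beta}=\varnothing$ from $X_{>\beta}=\varnothing$; yours has the added benefit of giving the explicit bounds $\cbrk Y\leq\cbrk X$ and indeed $\cbrk_Y(y)\leq\cbrk_X(i(y))$ pointwise, and of making transparent that spectrality of the inclusion is never used (the paper's proof also does not use it, but does not say so).

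One caveat on your closing remark: the generalisation to a continuous \emph{injection} is fine, since injectivity is exactly what makes $i^{-1}(\{i(y)\})=\{y\}$ in the base case and $U\cap Y_{>\alpha}\subseteq\{y\}$ in the successor step; but the parenthetical ``or even just a continuous map'' is false. For instance, the constant map from a perfect Stone space (e.g.\ the Cantor set, which is spectral and carries the constructible topology but has no isolated points, so its Cantor-Bendixson rank is undefined) to the one-point space is continuous and spectral as a map, yet the target has Cantor-Bendixson rank and the source does not. So if you include the strengthening, state it for continuous injections only.
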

\begin{proof}
We claim that for every ordinal $\alpha$ there exists an $\alpha' \geq \alpha$ such that $Y_{>\alpha} \subseteq X_{>\alpha'}$. This suffices since, by the assumption that $X$ has Cantor-Bendixson rank, there exists an ordinal $\beta$ with $X_{>\beta} = \varnothing$ yielding
\begin{displaymath}
\exists\; \beta'\geq\beta \; \text{such that} \; Y_{>\beta} \subseteq X_{>\beta'} = \varnothing.
\end{displaymath}
This implies $Y_{>\beta} = \varnothing$ and so $\cbrk Y \leq \beta$.

We now prove the claim by transfinite induction. For the base case, let $\alpha'$ be the least ordinal such that $X_{\leq\alpha'} \cap Y \neq \varnothing$ i.e.\ the least Cantor-Bendixson rank in $X$ of a point in $Y$. We note that $\alpha'$ must be a successor ordinal. Indeed, if $\alpha'$ were a limit ordinal then we would have
\begin{displaymath}
\varnothing \neq X_{\leq \alpha'} \cap Y = (\bigcup_{\kappa<\alpha'} X_{\leq \kappa})\cap Y = \bigcup_{\kappa<\alpha'}(X_{\leq \kappa} \cap Y),
\end{displaymath}
showing there exists a $\kappa < \alpha'$ with $X_{\leq \kappa}\cap Y\neq \varnothing$ contradicting the minimality of $\alpha'$. So $\alpha'-1$ exists and satisfies $X_{\leq\alpha'-1}\cap Y = \varnothing$ from which we deduce $Y_{> -1} = Y\subseteq X_{>\alpha'-1}$.

Next suppose $\alpha$ is an ordinal for which there exists an $\alpha'\geq\alpha$ with $Y_{>\alpha} \subseteq X_{>\alpha'}$. We can consider the inclusions
\begin{displaymath}
Y_{>\alpha+1} \subseteq Y_{>\alpha} \subseteq X_{>\alpha'}
\end{displaymath}
and apply the argument used for the base case to the composite inclusion. This furnishes us with an $\alpha''$ such that
\begin{displaymath}
Y_{>\alpha+1} \subseteq (X_{>\alpha'})_{>\alpha'' -1} = X_{>\alpha'+\alpha''-1}
\end{displaymath}
as required.

Finally, suppose $\lambda$ is a limit ordinal such that for every $\alpha<\lambda$ there is an $\alpha'>\alpha$ with $Y_{>\alpha} \subseteq X_{>\alpha'}$, and fix such a family of ordinals $\alpha'$. Then, since the $X_{>\alpha'}$ form a descending chain, we have
\begin{displaymath}
Y_{>\lambda} = \bigcap_{\alpha<\lambda} Y_{>\alpha} \subseteq \bigcap_{\alpha'}X_{>\alpha'} \subseteq X_{>\lambda'}
\end{displaymath}
where $\lambda'$ is the supremum of the ordinals $\alpha'$. Thus the claim holds, which proves the statement as indicated at the beginning of the proof.
\end{proof}

\begin{lem}
Let $X$ be a spectral space such that $X = X^\mathrm{con}$ and $X$ has Cantor-Bendixson rank. Then $\cbrk$ is a spectral dimension function on $X$.
\end{lem}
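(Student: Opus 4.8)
The statement to be proved is that if $X$ is a spectral space carrying the constructible topology and having Cantor--Bendixson rank, then $\cbrk \colon X \to \Ord$ is a \emph{spectral} dimension function, i.e.\ that $X_{\leq \alpha}$ is a Thomason subset of $X$ for every ordinal $\alpha$. The plan is to prove this by transfinite induction on $\alpha$, leaning heavily on two facts from the excerpt: first, that $X = X^{\mathrm{con}}$ forces $X$ to be quasi-compact Hausdorff (Proposition~\ref{prop_cons}), so the opens and the quasi-compact opens coincide and closed subsets are exactly the quasi-compact ones; and second, that in such a space a subset is Thomason precisely when it is a union of closed (equivalently quasi-compact open) subsets with quasi-compact open complement, which since the complement of an open is closed and hence quasi-compact, just amounts to being open. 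So the real content is: \emph{$X_{\leq \alpha}$ is open in $X$ for every $\alpha$}.

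First I would handle the base case. By definition $X_{\leq 0}$ is the set of isolated points of $X$, which is a union of singletons each of which is open, hence $X_{\leq 0}$ is open, hence Thomason (its complement $X_{>0}$ is closed in the Hausdorff space $X$, hence quasi-compact). For the successor step, assume $X_{\leq \alpha}$ is open (equivalently Thomason); then $X_{>\alpha}$ is closed, hence a spectral subspace of $X$ and itself quasi-compact Hausdorff with the constructible topology, so by Lemma~\ref{lem_consvis} every point of it is visible and the isolated points of $X_{>\alpha}$ form an open subset of $X_{>\alpha}$, say $U$. Now $X_{\leq \alpha+1} = X_{\leq \alpha} \cup U$; I need that this is open in $X$. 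Since $U$ is open in the closed subspace $X_{>\alpha}$, write $U = V \cap X_{>\alpha}$ for some $V$ open in $X$; then $X_{\leq \alpha+1} = X_{\leq \alpha} \cup (V \setminus X_{\leq \alpha}) = X_{\leq \alpha} \cup V$, a union of two opens, hence open. Its complement $X_{>\alpha+1}$ is then closed, hence quasi-compact, so $X_{\leq \alpha+1}$ is Thomason.

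For the limit step, $\lambda$ a limit ordinal with $X_{\leq \kappa}$ open for all $\kappa < \lambda$: by definition $X_{\leq \lambda} = \bigcup_{\kappa < \lambda} X_{\leq \kappa}$ is a union of opens, hence open, and its complement $X_{>\lambda}$ is closed, hence quasi-compact, so $X_{\leq \lambda}$ is Thomason. This closes the induction and shows $\cbrk$ is spectral; compatibility with the class of spectral spaces having Cantor--Bendixson rank was already observed in Definition~\ref{defn_cbrank}, and that $\cbrk$ is a dimension function follows from its recursive definition (it takes no limit values, and conditions (ii), (iii) hold because a point open in $X_{>\alpha}$ has no proper specialisations inside $X_{>\alpha}$ — in fact in the Hausdorff space $X$ there are no nontrivial specialisations at all, so (ii) and (iii) are automatic).

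The main obstacle, such as it is, is the bookkeeping in the successor step — making sure that "open in the subspace $X_{>\alpha}$" can be promoted to "open in $X$" after adjoining $X_{\leq \alpha}$; the trick is simply that $X_{>\alpha}$ is already \emph{open}'s complement, so extending an open-in-$X_{>\alpha}$ set by the open set $X_{\leq \alpha}$ lands back in the opens of $X$. Everything else is a direct consequence of the Hausdorffness of $X^{\mathrm{con}}$ (which collapses the distinction between Thomason, specialisation-closed, and open) together with the already-established fact that spectral subspaces inherit the constructible topology, so Lemma~\ref{lem_consvis} keeps supplying isolated points at each stage.
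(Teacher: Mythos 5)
Your transfinite induction (base case: union of isolated points; successor: promote an open of $X_{>\alpha}$ to an open of $X$ by adjoining the open set $X_{\leq\alpha}$; limit: union of opens) is exactly the paper's argument, just phrased with one global open $V$ instead of the paper's pointwise neighbourhoods, and the remarks on the dimension-function axioms and compatibility are fine. So the body of the proof is correct and takes essentially the same route.

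However, your justification of the preliminary reduction ``Thomason $=$ open'' contains a false step. You assert that in a quasi-compact Hausdorff spectral space ``the opens and the quasi-compact opens coincide'', equivalently that closed subsets are the same as quasi-compact open ones; this is wrong in general. In the Cantor set (a spectral space carrying its constructible topology), a non-isolated point is closed but not open, and its complement is open but not quasi-compact; taken literally, your claims would force every open to be clopen and hence, with Hausdorffness, every singleton to be open, i.e.\ $X$ finite. What is true, and what the argument needs, is only this: since $X=X^{\mathrm{con}}$, the complement of any quasi-compact open is again open, so the quasi-compact opens are precisely the clopen subsets; they form a basis, so every open set is a union of clopens, and each clopen is a closed set whose complement is quasi-compact open. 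Hence open implies Thomason (and conversely, any Thomason subset is a union of such clopens, hence open), which is the equivalence the paper cites from \cite{StevensonAbsFlat}. With that repair the proof is complete; note also that the appeal to Lemma~\ref{lem_consvis} in the successor step is superfluous, since visibility of points plays no role here and the set of isolated points of $X_{>\alpha}$ is open in $X_{>\alpha}$ by definition.
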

\begin{proof}
As $X = X^\mathrm{con}$ a subset of $X$ is Thomason if and only if it is open (see e.g.\ \cite{StevensonAbsFlat}*{Lemma~3.5}). We proceed by transfinite induction, the base case being trivial as $X_{\leq 0}$ is a union of open points. Suppose $\alpha \in \Ord$ and $X_{\leq \alpha}$ is open. By definition
\begin{displaymath}
X_{\leq \alpha+1} = X_{\leq \alpha} \cup \{x\in X_{>\alpha}\; \vert \; x \; \text{is open in} \; X_{> \alpha}\}.
\end{displaymath}
If $x\in X_{\leq \alpha+1}\setminus X_{\leq \alpha}$, that is to say is open in $X_{>\alpha}$, then there is an open neighbourhood $U$ of $x$ such that $U\cap X_{>\alpha} = \{x\}$ i.e., $U\setminus \{x\} \subseteq X_{\leq \alpha}$. Thus $U$ is an open neighbourhood of $x$ contained in $X_{\leq \alpha+1}$ showing $X_{\leq \alpha+1}$ is open. If $\lambda$ is a limit ordinal then
\begin{displaymath}
X_{\leq \lambda} = \bigcup_{\kappa < \lambda} X_{\leq \kappa}.
\end{displaymath}
By the induction hypothesis each $X_{\leq \kappa}$ is open and thus $X_{\leq \lambda}$ is also open.
\end{proof}

Now that we have shown the Cantor-Bendixson rank gives a family of spectral dimension functions on the subspace closed class of constructible spectral spaces with Cantor-Bendixson rank we are in a position to apply Theorem~\ref{thm_abstract} and prove the local-to-global principle. We do so after the following lemma, which is somewhat of an aside, demonstrating that isolated points in the spectrum of a rigidly-compactly generated tensor triangulated category produce particularly nice idempotent objects.

%ick
%We are now in a position to prove another instance of the local-to-global principle for a class of triangulated categories by applying our abstract theorem. We verify the condition on rank zero points which is required to apply the theorem in the following two lemmas (in fact we prove more than we require to apply the theorem).

\begin{lem}\label{lem_absflat1}
Let $\sfT$ be a rigidly-compactly generated tensor triangulated category whose spectrum, $X=\Spc\sfT^c$, carries the constructible topology and has Cantor-Bendixson rank. Then for $x\in X_{\leq 0}$, where $X_{\leq 0}$ is defined in terms of the Cantor-Bendixson dimension function, 
the localisation triangle
\begin{displaymath}
\GGamma_{\{x\}}\mathbf{1} \to \mathbf{1} \to L_{\{x\}}\mathbf{1} \to \Sigma \GGamma_{\{x\}}\mathbf{1}
\end{displaymath}
is split. In particular the object $\GGamma_x\mathbf{1} \cong \GGamma_{\{x\}}\mathbf{1}$ is compact.
\end{lem}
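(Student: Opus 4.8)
The plan is to show that the point $x \in X_{\leq 0}$ gives rise to a \emph{clopen} decomposition of $X$, and then to transport this decomposition through the classification of thick tensor ideals to obtain a splitting of the localisation triangle. Since $X = X^{\mathrm{con}}$ is Hausdorff, every point is closed, so $\{x\}$ is closed. On the other hand, $x \in X_{\leq 0}$ means precisely that $x$ is an isolated point, i.e.\ $\{x\}$ is open. Hence $\{x\}$ is a clopen subset of $X$, and so is its complement $X \setminus \{x\}$. By \cite{StevensonAbsFlat}*{Lemma~3.5} (as used in the preceding lemma), since $X$ carries the constructible topology the Thomason subsets are exactly the open subsets; thus both $\{x\}$ and $X\setminus\{x\}$ are Thomason, and moreover each is itself closed with quasi-compact open complement, so each is even a \emph{closed} Thomason subset.

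The key step is then the following: a clopen decomposition $X = \{x\} \sqcup (X\setminus\{x\})$ yields, via the classification \cite{BaSpec}*{Theorem~4.10}, a decomposition of $\sfT^c$ as a product of thick tensor ideals $\sfT^c_{\{x\}} \times \sfT^c_{X\setminus\{x\}}$, because $\supp t \cap (\supp t')$ being empty for the generators forces orthogonality. Passing to localising subcategories, one gets $\sfT \simeq \GGamma_{\{x\}}\sfT \times \GGamma_{X\setminus\{x\}}\sfT$ as a product of triangulated categories. Concretely: the complement of the Thomason subset $\{x\}$ is again Thomason, so $L_{\{x\}}\sfT = \GGamma_{X\setminus\{x\}}\sfT$, and the localisation sequence $\GGamma_{\{x\}}\sfT \to \sfT \to L_{\{x\}}\sfT$ becomes a product decomposition. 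I would argue this directly by showing $\Hom(\GGamma_{\{x\}}\mathbf 1, L_{\{x\}}\mathbf 1) = 0 = \Hom(L_{\{x\}}\mathbf 1, \Sigma\GGamma_{\{x\}}\mathbf 1)$: indeed $L_{\{x\}}(\GGamma_{\{x\}}\mathbf 1) = 0$ and $\GGamma_{\{x\}}(L_{\{x\}}\mathbf 1) = 0$ by the basic properties of Rickard idempotents, and $\Hom(\GGamma_{\{x\}}\mathbf 1, L_{\{x\}}\mathbf 1) \iso \Hom(\GGamma_{\{x\}}\mathbf 1, L_{\{x\}}\GGamma_{\{x\}}\mathbf 1) = 0$ by adjunction (using $\GGamma_{\{x\}}\mathbf 1 \otimes \GGamma_{\{x\}}\mathbf 1 \iso \GGamma_{\{x\}}\mathbf 1$). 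Vanishing of the connecting group $\Hom(\Sigma^{-1}L_{\{x\}}\mathbf 1, \GGamma_{\{x\}}\mathbf 1)$ follows the same way. Hence the triangle $\GGamma_{\{x\}}\mathbf 1 \to \mathbf 1 \to L_{\{x\}}\mathbf 1 \to \Sigma\GGamma_{\{x\}}\mathbf 1$ has zero connecting map and therefore splits, giving $\mathbf 1 \iso \GGamma_{\{x\}}\mathbf 1 \oplus L_{\{x\}}\mathbf 1$.

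For the final assertion: since $\mathbf 1$ is compact (being a generator, or by rigidity) and $\GGamma_{\{x\}}\mathbf 1$ is a direct summand of $\mathbf 1$, it is itself compact, as the compact objects form a thick subcategory closed under summands. The identification $\GGamma_x\mathbf 1 \iso \GGamma_{\{x\}}\mathbf 1$ is Definition~\ref{defn_gen_ptfunctors2} combined with Remark~\ref{rem_bik6.2_uber}: one takes $\mathcal V = \{x\}$ and $\mathcal W = \varnothing$ (both Thomason, with $\mathcal V \setminus (\mathcal V \cap \mathcal W) = \{x\}$), so $\GGamma_x\mathbf 1 = \GGamma_{\{x\}}\mathbf 1 \otimes L_\varnothing\mathbf 1 \iso \GGamma_{\{x\}}\mathbf 1$.

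I expect the main obstacle to be purely expository rather than mathematical: namely being careful that $\{x\}$ really is a Thomason subset in the required strong sense (a union—here a single one—of closed subsets with quasi-compact open complement) so that $\GGamma_{\{x\}}\sfT$ and $L_{\{x\}}\sfT$ are defined and fit into the smashing localisation sequence, and then cleanly deducing the splitting from the orthogonality vanishing. None of this requires new ideas beyond the standard formalism of Rickard idempotents recalled in the preliminaries.
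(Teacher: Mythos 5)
Your topological setup (that $\{x\}$ is clopen, hence both $\{x\}$ and its complement are Thomason) is correct and is the same starting point as the paper, but the justification of the decisive step fails. The splitting of the triangle is equivalent to the vanishing of $\Hom(L_{\{x\}}\mathbf{1},\Sigma\GGamma_{\{x\}}\mathbf{1})$, and you claim this ``follows the same way'' as $\Hom(\GGamma_{\{x\}}\mathbf{1},L_{\{x\}}\mathbf{1})=0$. It does not: the adjunction argument you give only shows that maps from $\mathcal{V}$-torsion objects to $\mathcal{V}$-local objects vanish, which holds for \emph{every} Thomason subset $\mathcal{V}$ and never uses that $x$ is isolated. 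If the same formal argument gave the other direction, every localisation triangle would split, which is false: in $\D(\mathbb{Z})$ with $\mathcal{V}=\mathcal{V}(p)$ one has $\Hom(L_{\mathcal{V}}\mathbf{1},\Sigma\GGamma_{\mathcal{V}}\mathbf{1})\iso\Hom(\mathbb{Z}[1/p],\mathbb{Z}/p^{\infty})\neq 0$. The group of maps from local to torsion objects is precisely the obstruction to splitting and is not controlled by any adjunction. For the same reason you cannot simply assert the product decomposition $\sfT^c\cong\sfT^c_{\{x\}}\times\sfT^c_{X\setminus\{x\}}$ or the identification $L_{\{x\}}\sfT=\GGamma_{X\setminus\{x\}}\sfT$: these are essentially equivalent to the splitting being proved and themselves require an argument.

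The missing step, and the paper's route, is to make essential use of the fact that $\mathcal{Z}(x)=X\setminus\{x\}$ is also Thomason: tensoring the localisation triangle for $\{x\}$ with $\GGamma_{\mathcal{Z}(x)}\mathbf{1}$ and the one for $\mathcal{Z}(x)$ with $L_{\{x\}}\mathbf{1}$ gives $L_{\{x\}}\mathbf{1}\iso\GGamma_{\mathcal{Z}(x)}\mathbf{1}$, while the unicity of $\GGamma_x\mathbf{1}$ (Remark~\ref{rem_bik6.2_uber}) gives $\GGamma_{\{x\}}\mathbf{1}\iso\GGamma_x\mathbf{1}\iso L_{\mathcal{Z}(x)}\mathbf{1}$. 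Then
\begin{displaymath}
\Hom(L_{\{x\}}\mathbf{1},\Sigma\GGamma_{\{x\}}\mathbf{1})\iso\Hom(\GGamma_{\mathcal{Z}(x)}\mathbf{1},\Sigma L_{\mathcal{Z}(x)}\mathbf{1})=0
\end{displaymath}
is an instance of the semiorthogonality you did establish, applied to the \emph{complementary} localisation, and the triangle splits. With that repair, the rest of your argument is fine: $\GGamma_x\mathbf{1}\iso\GGamma_{\{x\}}\mathbf{1}$ via $\mathcal{V}=\{x\}$, $\mathcal{W}=\varnothing$, and a summand of the compact unit is compact.
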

\begin{proof}
Let $x\in X_{\leq 0}$ so $x$ is open i.e., Thomason in $X$. Since $x$ is also closed its complement $\mathcal{Z}(x)$ is also a Thomason subset of $X$. This observation gives rise to localisation triangles (corresponding to smashing localisations)
\begin{displaymath}
\GGamma_{\{x\}}\mathbf{1} \to \mathbf{1} \to L_{\{x\}}\mathbf{1} \to \Sigma \GGamma_{\{x\}}\mathbf{1} \quad \text{and} \quad \GGamma_{\mathcal{Z}(x)}\mathbf{1} \to \mathbf{1} \to L_{\mathcal{Z}(x)}\mathbf{1} \to \Sigma \GGamma_{\{x\}}\mathbf{1}.
\end{displaymath}
Tensoring the first triangle with $\GGamma_{\mathcal{Z}(x)}\mathbf{1}$ and the second with $L_{\{x\}}\mathbf{1}$ yields isomorphisms
\begin{displaymath}
L_{\{x\}}\mathbf{1} \iso \GGamma_{\mathcal{Z}(x)}\mathbf{1}\otimes L_{\{x\}}\mathbf{1} \iso \GGamma_{\mathcal{Z}(x)}\mathbf{1}.
\end{displaymath}
By the unicity of $\GGamma_x\mathbf{1}$ (see \cite{BaRickard}*{Corollary~7.5}) we have isomorphisms
\begin{displaymath}
\GGamma_{\{x\}}\mathbf{1} \iso \GGamma_x\mathbf{1} \iso L_{\mathcal{Z}(x)}\mathbf{1}.
\end{displaymath}
From these two collections of isomorphisms we see
\begin{displaymath}
\Hom(L_{\{x\}}\mathbf{1}, \Sigma \GGamma_{\{x\}}\mathbf{1}) \iso \Hom(\GGamma_{\mathcal{Z}(x)}\mathbf{1}, \Sigma L_{\mathcal{Z}(x)}\mathbf{1}) \iso 0.
\end{displaymath}
Thus the triangle
\begin{displaymath}
\GGamma_{\{x\}}\mathbf{1} \to \mathbf{1} \to L_{\{x\}}\mathbf{1} \to \Sigma \GGamma_{\{x\}}\mathbf{1}
\end{displaymath}
splits and $\GGamma_{\{x\}}\mathbf{1} \iso \GGamma_x\mathbf{1}$ is a summand of $\mathbf{1}$ and hence compact.
\end{proof}

\begin{lem}\label{lem_absflat2}
Let $\sfT$ be a rigidly-compactly generated tensor triangulated category whose spectrum, $X=\Spc\sfT^c$, carries the constructible topology and has Cantor-Bendixson rank. Given any $A\in \sfT$ we have
\begin{displaymath}
\GGamma_{X_{\leq 0}}A \in \Loc( \GGamma_xA \; \vert \; x\in X_{\leq 0}),
\end{displaymath}
where $X_{\leq 0}$ is defined with respect to $\cbrk$.
\end{lem}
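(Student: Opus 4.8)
The plan is to deduce this directly from Lemma~\ref{lem_theone}, applied to the subspace closed class $\mathcal{X}$ of spectral spaces carrying the constructible topology and having Cantor-Bendixson rank, equipped with the compatible family of dimension functions $\cbrk$. That $\mathcal{X}$ is subspace closed is Lemma~\ref{lem_CBclosed}, and compatibility of the $\cbrk$ with $\mathcal{X}$ was recorded in Definition~\ref{defn_cbrank}; so the only point requiring verification is the hypothesis of Lemma~\ref{lem_theone}, namely that for every $X\in\mathcal{X}$ and every $x\in X_{\leq 0}$ the singleton $\{x\}$ is a Thomason subset of $X$.

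To see this, recall that by Definition~\ref{defn_cbrank} the set $X_{\leq 0}$ attached to the Cantor-Bendixson dimension function is precisely the set of isolated points of $X$, i.e.\ those $x$ for which $\{x\}$ is open. Since $X$ carries the constructible topology, a subset of $X$ is Thomason if and only if it is open (see \cite{StevensonAbsFlat}*{Lemma~3.5}); hence for $x\in X_{\leq 0}$ the singleton $\{x\}$, being open, is Thomason. This argument applies verbatim to any member of $\mathcal{X}$, so the hypothesis of Lemma~\ref{lem_theone} is met.

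Applying Lemma~\ref{lem_theone} to $\sfT$, with $X=\Spc\sfT^c\in\mathcal{X}$, then gives for every $A\in\sfT$ that $\GGamma_{X_{\leq 0}}A\in\Loc(\GGamma_xA \; \vert \; x\in X_{\leq 0})$, as claimed; in fact the proof of Lemma~\ref{lem_theone} yields the sharper statement that $\GGamma_{X_{\leq 0}}A\cong\coprod_{x\in X_{\leq 0}}\GGamma_xA$, and Lemma~\ref{lem_absflat1} identifies each summand $\GGamma_xA\cong\GGamma_{\{x\}}\mathbf{1}\otimes A$ as a direct summand of $A$. There is no real obstacle here: the entire content of the lemma is the topological input that in a space carrying the constructible topology the isolated points have Thomason (indeed, clopen) singletons, which is exactly what feeds Lemma~\ref{lem_theone}.
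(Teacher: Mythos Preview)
Your proof is correct and follows essentially the same approach as the paper: both reduce the statement to Lemma~\ref{lem_theone} by checking that for $x\in X_{\leq 0}$ the singleton $\{x\}$ is Thomason. The paper argues this directly from the definition (the complement of the open point $\{x\}$ is closed in a quasi-compact space, hence quasi-compact), while you invoke the equivalence of Thomason and open subsets in a space carrying the constructible topology; these are cosmetic variants of the same verification.
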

\begin{proof}
Let $x$ be a point of $X_{\leq 0}$. Then, by definition, $\{x\}$ is an open subset of $X$. Thus its complement is closed and hence quasi-compact. This shows $\{x\}$, which is also a closed subset, is Thomason. Having verified this condition we can appeal to Lemma~\ref{lem_theone} to finish the proof.
%(see \cite{StevensonAbsFlat}*{Lemma~3.4} for instance).
\end{proof}

\begin{thm}\label{thm_absflat}
Suppose $\sfT$ is a rigidly-compactly generated tensor triangulated category whose spectrum carries the constructible topology and has Cantor-Bendixson rank. In addition suppose $\sfT$ satisfies at least one of the following two conditions:
\begin{itemize}
\item[(1)] $\sfT$ has a monoidal model;
\item[(2)] $\cbrk\Spc \sfT^c < \omega + \omega.$
\end{itemize}
Then for every object $A$ of $\sfT$ we have
\begin{displaymath}
A \in \Loc( \GGamma_xA \; \vert \; x\in \supp A ).
\end{displaymath}
In particular, the local-to-global principle holds for the action of $\sfT$ on itself and for every $A\in \sfT$ we have $\supp A = \varnothing$ if and only if $A\iso 0$. Furthermore these properties hold for the action of $\sfT$ on any other compactly generated triangulated category.
\end{thm}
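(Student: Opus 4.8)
The plan is to recognise Theorem~\ref{thm_absflat} as an instance of the abstract machinery furnished by Theorem~\ref{thm_abstract}: I would take $\mathcal{X}$ to be the class of spectral spaces which carry the constructible topology and have Cantor-Bendixson rank, and $\mathcal{D}$ to be the associated class of Cantor-Bendixson dimension functions of Definition~\ref{defn_cbrank}, and then verify that the hypotheses of Theorem~\ref{thm_abstract} (under assumption $(1)$) or of Corollary~\ref{cor_abstract} (under assumption $(2)$) are met. Concretely there are four points to confirm: that $\mathcal{X}$ is subspace closed; that every point of every $X\in\mathcal{X}$ is visible; that $\mathcal{D}$ is a compatible class of spectral dimension functions; and that for every rigidly-compactly generated tensor triangulated category $\sfT$ with $Y=\Spc\sfT^c\in\mathcal{X}$ one has $\GGamma_{Y_{\leq 0}}A\in\Loc(\GGamma_x A\mid x\in Y_{\leq 0})$ for all $A\in\sfT$.

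Each of these has essentially already been arranged in the preceding lemmas. For subspace closure I would invoke Lemma~\ref{lem_CBclosed} for the Cantor-Bendixson rank component, together with the elementary remark that a spectral subspace of a space carrying the constructible topology again carries the constructible topology; this is because a spectral space is Hausdorff if and only if it carries the constructible topology (one implication being part of Proposition~\ref{prop_cons}, the converse because for a Hausdorff spectral space $Z$ the continuous bijection $Z^{\mathrm{con}}\to Z$ out of the quasi-compact space $Z^{\mathrm{con}}$ into the Hausdorff space $Z$ is a homeomorphism), and subspaces of Hausdorff spaces are Hausdorff. Visibility of all points is Lemma~\ref{lem_consvis}, and it passes to spectral subspaces by Lemma~\ref{lem_vis}. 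That $\cbrk$ is a spectral dimension function on each member of $\mathcal{X}$ is the content of the lemmas of this section, and compatibility with $\mathcal{X}$ was already observed in Definition~\ref{defn_cbrank}. Finally, the dimension-zero base case is precisely Lemma~\ref{lem_absflat2}.

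With these facts assembled I would conclude by simply applying Theorem~\ref{thm_abstract} when $\sfT$ has a monoidal model, and Corollary~\ref{cor_abstract} when instead $\cbrk\Spc\sfT^c<\omega+\omega$; in either case this yields $A\in\Loc(\GGamma_x A\mid x\in\supp A)$ for all $A\in\sfT$, which is the displayed assertion. The remaining claims — that the local-to-global principle holds for the action of $\sfT$ on itself, that $\supp A=\varnothing$ if and only if $A\cong 0$, and that both statements persist for the action of $\sfT$ on any compactly generated triangulated category — then follow from Corollary~\ref{cor_ltg}, exactly as in \cite{StevensonActions}*{Theorem~6.9}.

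I do not expect a genuine obstacle: the substantive input has been packaged into the preceding lemmas, the crucial point being the reduction to Cantor-Bendixson dimension zero, where the idempotents $\GGamma_x\mathbf{1}$ are compact and split off the unit (Lemma~\ref{lem_absflat1}), so that the general criterion of Lemma~\ref{lem_theone} applies through Lemma~\ref{lem_absflat2}. The only places calling for any attention are the topological observation that spectral subspaces of constructible spectral spaces remain constructible, and the routine bookkeeping of routing hypothesis $(1)$ through Theorem~\ref{thm_abstract} and hypothesis $(2)$ through Corollary~\ref{cor_abstract}.
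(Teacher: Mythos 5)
Your proposal is correct and follows essentially the same route as the paper: one verifies the hypotheses of Theorem~\ref{thm_abstract} (resp.\ Corollary~\ref{cor_abstract} under the dimension bound) using Lemma~\ref{lem_consvis}, Lemma~\ref{lem_vis}, Lemma~\ref{lem_CBclosed}, the lemma showing $\cbrk$ is a spectral dimension function, and Lemma~\ref{lem_absflat2}, and then deduces the remaining statements from Corollary~\ref{cor_ltg}. Your explicit argument that a spectral subspace of a space carrying the constructible topology again carries the constructible topology (via quasi-compactness of $Z^{\mathrm{con}}$ and Hausdorffness) fills in a point the paper only asserts in passing, and it is correct.
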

\begin{proof}
We first note that, as recalled in Lemma~\ref{lem_consvis}, every point of a spectral space equipped with the constructible topology is visible. So, by what we have thusfar proved in this section, either Theorem~\ref{thm_abstract} or Corollary~\ref{cor_abstract} applies. The remaining statements are a consequence of Corollary~\ref{cor_ltg}.
\end{proof}

\section{An application to absolutely flat rings}

In \cite{StevensonAbsFlat} it was shown that the local-to-global principle holds for derived categories of certain absolutely flat rings (the definition of absolutely flat is recalled below) and does not hold in certain other cases. The theorem of the last section, Theorem~\ref{thm_absflat}, allows us to prove the local-to-global principle in the remaining cases (cf.\ Remark~4.22 of the aforementioned paper).

First we recall the relevant definitions; further details and motivation can be found in \cite{StevensonAbsFlat}.

\begin{defn}
Let $R$ be a commutative ring with unit. We say $R$ is \emph{absolutely flat} (also known as \emph{von Neumann regular}) if for every $r\in R$ there exists some $x\in R$ satisfying
\begin{displaymath}
r = r^2 x.
\end{displaymath}
\end{defn}

\begin{defn}\label{defn_semiartinian}
Let $S$ be a ring (not necessarily absolutely flat). We say $S$ is \emph{semi-artinian} if every non-zero homomorphic image of $S$, in the category of $S$-modules, contains a simple submodule.

%An $S$-module $M$ is said to be \emph{superdecomposable} if it admits no non-zero indecomposable direct summands. 
\end{defn}

Given a prime ideal $P\in \Spec R$ we denote by $k(P)$ the corresponding residue field, which is just $R_P$ in the case $R$ is absolutely flat.

\begin{thm}\label{thm_app}
Let $R$ be a semi-artinian absolutely flat ring. Then $\D(R)$ satisfies the local-to-global principle. In particular the residue fields of $R$ generate the derived category $\D(R)$. Moreover, there is an order preserving bijection
\begin{displaymath}
\left\{ \begin{array}{c}
\text{subsets of}\; \Spec R 
\end{array} \right\}
\xymatrix{ \ar[r]<1ex>^{\tau} \ar@{<-}[r]<-1ex>_{\sigma} &} \left\{
\begin{array}{c}
\text{localising subcategories of} \; \D(R) \\
\end{array} \right\}, 
\end{displaymath}
where for a localising subcategory $\mathcal{L}$ and a subset $W\cie \Spec R$ we set
\begin{displaymath}
\sigma(\mathcal{L}) = \{ P\in \Spec R \; \vert \; k(P)\otimes \mathcal{L}\neq 0\} \quad \text{and} \quad \tau(W) = \Loc (k(P) \; \vert \; P\in W).
\end{displaymath}
\end{thm}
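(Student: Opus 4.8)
The plan is to derive Theorem~\ref{thm_app} from Theorem~\ref{thm_absflat} in two stages: first verify that $\D(R)$ and $\Spec R$ satisfy the hypotheses of Theorem~\ref{thm_absflat}, and then bootstrap the resulting local-to-global principle and support theory into the claimed classification of localising subcategories. For the first stage, recall from \cite{StevensonAbsFlat} that when $R$ is absolutely flat, $\Spc\D(R)^c \cong \Spec R$ carries the constructible topology (the Zariski and constructible topologies coincide, as every prime is both maximal and minimal). The essential new input is that semi-artinianness of $R$ forces $\Spec R$, viewed as a Boolean space, to have well-defined Cantor--Bendixson rank: the isolated points of $\Spec R$ correspond to primes $P$ such that $k(P)$ is a summand of $R$, i.e.\ to the simple $R$-modules, and the semi-artinian hypothesis — every nonzero quotient has a simple submodule — is exactly what guarantees that stripping off isolated points transfinitely eventually exhausts $\Spec R$. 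I would make this precise by observing that the passage $X \leadsto X_{>0}$ on $\Spec R$ corresponds on the ring side to passing to a suitable quotient $R \to R'$ (killing the idempotents cutting out the isolated points), that $R'$ is again absolutely flat and semi-artinian, and that $\Spec R' = \varnothing$ forces $R' = 0$; since a decreasing transfinite chain of subsets of the set $\Spec R$ must stabilise, and it can only stabilise at $\varnothing$ by the above, $\cbrk\Spec R$ is defined. Also, $\D(R)$ has a monoidal model (the projective model structure on chain complexes), so condition~(1) of Theorem~\ref{thm_absflat} holds, and Theorem~\ref{thm_absflat} applies. This gives $A \in \Loc(\GGamma_x A \mid x\in\supp A)$ for all $A$; combined with the identification $\GGamma_P A \simeq k(P)\otimes A$ (up to the relevant Rickard idempotent machinery, again from \cite{StevensonAbsFlat}), taking $A = R$ yields that the residue fields generate $\D(R)$, and Corollary~\ref{cor_ltg} gives $\supp A = \varnothing \iff A \simeq 0$.

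For the second stage — the bijection between subsets of $\Spec R$ and localising subcategories — the strategy is the standard support-theoretic one. One checks $\sigma$ and $\tau$ are both order preserving (immediate from the definitions) and then verifies the two round trips. That $\sigma\tau(W) = W$ reduces to computing $\sigma(\Loc(k(P)\mid P\in W))$, which amounts to showing $k(Q)\otimes k(P) = 0$ for $P\neq Q$ and $k(P)\otimes k(P)\neq 0$; the first holds because $k(Q)\otimes_R k(P) = R_Q \otimes_R R_P = 0$ when $P\neq Q$ are distinct primes in an absolutely flat ring, and one then propagates this through $\Loc$ using that $k(Q)\otimes -$ is an exact coproduct-preserving functor, so the class of objects it kills is localising. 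That $\tau\sigma(\mathcal{L}) = \mathcal{L}$ is where the work of the first stage pays off: given a localising subcategory $\mathcal{L}$, Lemma~\ref{lem_absflat1} shows each $\GGamma_P\mathbf{1} \cong k(P)$-like idempotent is compact, so $\GGamma_P$ preserves the localising subcategory $\mathcal{L}$; hence for $A\in\mathcal{L}$ we have $\GGamma_P A \in \mathcal{L}$ for all $P$, and by the local-to-global principle $A \in \Loc(\GGamma_P A \mid P\in\supp A) \subseteq \Loc(k(P)\otimes A \mid P\in\sigma(\{A\})) \subseteq \tau\sigma(\mathcal{L})$, using that $\GGamma_P A$ lies in $\Loc(k(P))$ (since $\GGamma_P A \simeq k(P)\otimes A$ and $k(P)\otimes A\in\Loc(k(P)\otimes \D(R))=\Loc(k(P))$ by compactness of $k(P)$ and rigidity). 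This gives $\mathcal{L}\subseteq\tau\sigma(\mathcal{L})$; the reverse inclusion $\tau\sigma(\mathcal{L})\subseteq\mathcal{L}$ follows once we know $k(P)\in\mathcal{L}$ whenever $P\in\sigma(\mathcal{L})$, which in turn holds because $P\in\sigma(\mathcal{L})$ means $k(P)\otimes A\neq 0$ for some $A\in\mathcal{L}$, and $k(P)\otimes A$ is a nonzero object of $\Loc(k(P))$ that lies in $\mathcal{L}$, while $\Loc(k(P))$ is a minimal localising subcategory (it is generated by the compact object $k(P)$ whose endomorphism ring is a field, hence has no nontrivial localising subcategories).

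The main obstacle, I expect, is the first stage: cleanly establishing that semi-artinianness is equivalent to $\Spec R$ having Cantor--Bendixson rank, and in particular making the inductive dévissage $R \leadsto R/(\text{sum of minimal idempotents})$ rigorous at limit stages, where one must argue that an infinite product/quotient construction stays absolutely flat and semi-artinian and that the transfinite process genuinely terminates rather than stabilising at a nonzero ring. Once that topological fact is in hand, Theorem~\ref{thm_absflat} does the heavy lifting and the classification is a formal consequence of the local-to-global principle together with the minimality of the $\Loc(k(P))$, which is the same argument as in \cite{StevensonAbsFlat} for the non-semi-artinian-with-noetherian-spectrum case.
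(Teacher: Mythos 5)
Your overall route is the same as the paper's: check that $\Spec R \cong \Spc\D(R)^c$ carries the constructible topology and has Cantor--Bendixson rank, note that $\D(R)$ has a monoidal model, apply Theorem~\ref{thm_absflat}, identify $\GGamma_P R$ with $k(P)$ to get generation by residue fields, and then deduce the classification. The one place you diverge is the Cantor--Bendixson input: the paper simply cites the literature (Garavaglia, or Trlifaj) for the fact that the spectrum of a semi-artinian absolutely flat ring has Cantor--Bendixson rank, whereas you propose to prove it by a ring-theoretic d\'evissage (isolated points correspond to minimal idempotents, pass to $R$ modulo its socle, and semi-artinianness forces the transfinite process to terminate at the zero ring). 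That sketch is essentially the right argument and could be completed, but as you yourself flag, it is only a sketch, and at the level of this proof it is an unnecessary detour since the fact is available off the shelf.

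The genuine gap is in your second stage. You invoke Lemma~\ref{lem_absflat1} to claim that each $\GGamma_P\mathbf{1}\cong k(P)$ is compact, and you use this both to conclude that $\GGamma_P$ preserves an arbitrary localising subcategory $\mathcal{L}$ and to justify the minimality of $\Loc(k(P))$. But Lemma~\ref{lem_absflat1} only applies to points of Cantor--Bendixson rank $0$, i.e.\ to isolated points of $\Spec R$. For a non-isolated prime $P$ (for instance the point at infinity when $R$ is the ring of eventually constant sequences in $\prod_{\mathbb{N}}k$), $k(P)$ is not a perfect complex, $\GGamma_P\mathbf{1}$ is not compact, and the localisation triangle does not split, so this justification fails exactly at the points the semi-artinian case is designed to handle. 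Both facts you need are nevertheless true, but for different reasons: since the tensor unit $R$ generates $\D(R)$, every localising subcategory is automatically a tensor ideal (for fixed $A\in\mathcal{L}$ the class of $B$ with $B\otimes A\in\mathcal{L}$ is localising and contains $R$), which gives $k(P)\otimes A\in\mathcal{L}$ whenever $A\in\mathcal{L}$; and $\Loc(k(P))$ is minimal not because $k(P)$ is compact but because it is a field object, so every object of $\Loc(k(P))$ decomposes as a coproduct of suspensions of $k(P)$. These are precisely the ingredients of the proof of Theorem~4.23 of \cite{StevensonAbsFlat}, to which the paper defers for this step; with those substitutions your argument for the bijection goes through.
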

\begin{proof}
Let $R$ be a semi-artinian absolutely flat ring as in the statement. Then $\Spec R$ is certainly spectral and carries the constructible topology (see \cite{Olivier_UAF}*{Proposition~5} for example for the second statement). By \cite{Garavaglia}*{Theorem~4} (or see \cite{TrlifajSuperdecomposable} which is more easily obtained) the space $\Spec R \cong \Spc \D^\mathrm{perf}(R)$ has Cantor-Bendixson rank. Finally, it is a standard fact that $\D(R)$ is the homotopy category of a stable monoidal model category. So we can apply Theorem~\ref{thm_absflat} to deduce that $\D(R)$ satisfies the local-to-global principle.

Given a prime ideal $P\in \Spec R$ we have $k(P) \cong R_P \cong \GGamma_P R$ (see \cite{StevensonAbsFlat}*{Lemma~4.2} and the discussion following it) from which it is immediate, by an application of the local-to-global principle to the stalk complex $R$, that the residue fields generate. The classification result is also an immediate consequence of the local-to-global principle and follows as in the proof of \cite{StevensonAbsFlat}*{Theorem~4.23}.
\end{proof}

  \bibliography{greg_bib}

\end{document}